\definecolor{rred}{rgb}{0.7,0.0,0.2}
\definecolor{bblue}{rgb}{0.2,0.0,0.7}
\newcommand{\be}{\begin{eqnarray}}
\newcommand{\ee}{\end{eqnarray}}
\newcommand{\ba}{\begin{align}}
\newcommand{\ea}{\end{align}}
\newcommand{\bi}{\begin{itemize}}
\newcommand{\ei}{\end{itemize}}
\newcommand{\R}{\mathbb R}
\newcommand{\beq}[1]{\begin{equation} \label{#1}}
\newcommand{\eeq}{\end{equation}}
\newcommand{\beqa}{\begin{eqnarray}}
\newcommand{\eeqa}{\end{eqnarray}}
\newcommand{\bal}{\begin{align}}
\newcommand{\eal}{\end{align}}
\newcommand{\bsub}{\begin{subequations}}
\newcommand{\esub}{\end{subequations}}
\newcommand{\eqlab}[1]{\label{eq:#1}}
\renewcommand{\eqref}[1]{(\ref{eq:#1})}
\newcommand{\eqsref}[2]{(\ref{eq:#1}) and~(\ref{eq:#2})}
\newcommand{\figref}[1]{Fig.~\ref{fig:#1}}
\newcommand{\figlab}[1]{\label{fig:#1}}
\newcommand{\secref}[1]{section~\ref{sec:#1}}
\newcommand{\seclab}[1]{\label{sec:#1}}
\newcommand{\exmref}[1]{Example~\ref{example:#1}}
\newcommand{\exmlab}[1]{\label{example:#1}}
\newcommand{\remref}[1]{Remark~\ref{remark:#1}}
\newcommand{\remlab}[1]{\label{remark:#1}}
\newcommand{\corref}[1]{Corollary~\ref{Corollary:#1}}
\newcommand{\corlab}[1]{\label{Corollary:#1}}
\newcommand{\thmref}[1]{Theorem~\ref{theorem:#1}}
\newcommand{\thmlab}[1]{\label{theorem:#1}}
\newcommand{\lemmalab}[1]{\label{lemma:#1}}
\title{An iterative method for the approximation of fibers in slow-fast systems}
\author{K. Uldall Kristiansen, M. Br{\O}ns and J. Starke\thanks{Department of Applied Mathematics and Computer Science, Technical University of Denmark, 2800 Kgs. Lyngby, DK. K. Uldall Kristiansen was funded by a H. C. {\O}rsted post doc grant.}}
\begin{document}

\maketitle


 


\begin{abstract}
In this paper we extend a method for iteratively improving slow manifolds
so that it also can be used to approximate the fiber directions. 
The extended method is applied to general finite dimensional real analytic
systems where we obtain exponential estimates of the tangent spaces to the
fibers. The method is demonstrated
on the Michaelis-Menten-Henri model and the Lindemann mechanism. The latter example also serves to demonstrate the method on a slow-fast system in non-standard slow-fast form. Finally, we extend the method further
so that it also approximates the curvature of the fibers.
\end{abstract} 
\begin{keywords} 
Slow-fast systems, singular perturbation theory, reduction methods.
\end{keywords}

\begin{AMS}
34E15, 34E13, 37M99
\end{AMS}

\pagestyle{myheadings}
\thispagestyle{plain}
%
 \section{Introduction}
Singularly perturbed systems involving different scales in time and/or space arise in a wide variety of scientific problems. Important examples include: meteorology and short-term weather forecasting \cite{lor2,lor1,tem1}, molecular physics and the Born-Oppenheimer approximation \cite{McQ1}, chemical enzyme kinetics and the Michaelis-Menten mechanism \cite{micmen1}, predator-prey and reaction-diffusion models \cite{Mur1}, the evolution and stability of the solar system \cite{las1,las2} and the modeling of tethered satellites \cite{kri1,kri2}. These systems can also be ``artificially constructed'' by a partial scaling of variables near a bifurcation \cite{romtur1}. The main advantage of identifying slow and fast variables is dimension reduction by which all the fast variables are \textit{slaved} to the slow ones through the \textit{slow manifold}. Dimension reduction is one of the main aims and tools for a dynamicist and the elimination of fast variables is very useful in for example numerical computations. 
Since 
fast variables require more computational effort and evaluations, this reduction often bridges the gap between tractable and intractable computations. An example of this is the long time ($Gyear$s) integration of the solar system, see \cite{las1,las2}. See also \cite{cotrei1} for a numerical treatment of slow-fast systems.

In this paper, we consider standard slow-fast systems of the form
\begin{eqnarray}
 \dot x &=&\epsilon X(x,y),\eqlab{xys0}\\
 \dot y &=& Y(x,y),\nonumber\\
 \dot{()} &=&\frac{d}{dt},\nonumber
\end{eqnarray}
with a small parameter $\epsilon$. The vector-fields $X$ and $Y$ will be analytic in $x$ and $y$ but may in
general also depend upon $\epsilon$. For simplicity we shall, however, always suppress the $\epsilon$-dependency only making reference to it when needed.


Let $M_0=\{(x,y)\vert Y(x,y)=0\}$. We will return to this set $M_0=M_0(\epsilon)$, which depends upon $\epsilon$, later, but we will first consider the constrained set $M_0(0)=M_0\vert_{\epsilon=0}$ instead. This is the critical manifold \cite{jon1} and it is a fixed point set of \eqref{xys0}$\vert_{\epsilon=0}$ and therefore invariant. If these fixed points are hyperbolic: 
{\begin{eqnarray}
\lambda \in \sigma\left(\partial_y Y(x,y)\vert_{M_0(0)}\right) \Rightarrow \vert \text{Re}\,\lambda\vert \ge \lambda_0 \ne 0,\eqlab{M0hyp}
\end{eqnarray}}
with $\lambda_0$ independent of $\epsilon$, then $M_0(0)$ is said to be normally hyperbolic. Here $\sigma(A)$ denotes the
spectrum of an operator $A$. Moreover, $\partial_z$ is used to denote the partial derivatives $\frac{\partial
}{\partial z}$, and we will continue to use this symbol regardless of what
object is being differentiated. 
In the case of normally hyperbolicity, and when $X$ and $Y$ are also smooth in $\epsilon$, then Fenichel's theory \cite{fen1}, \cite[Theorem 2, p.8]{jon1} applies and one can conclude that there exists an $\epsilon_0$ such that $M_0(0)$ perturbs to an invariant normally hyperbolic set $M=M(\epsilon)$ for all $\epsilon\le \epsilon_0$. Moreover, to each point $x\in M$ there exists stable/unstable fibers where points contract exponentially fast as $t\rightarrow \infty$/$t\rightarrow -\infty$ towards the forward/backward flow of $x$. The unions of these fibers make up the local stable and unstable manifolds of $M$ which are diffeomorphic to the unperturbed ones. 
{Fenichel's slow manifolds are examples of normally hyperbolic invariant manifolds. These are global objects. Slow manifolds are also examples of center manifolds \cite{murd1,har1,tak1}. However, center manifolds may also be examples of non-slow, local, normally hyperbolic invariant manifolds. }

If $M_0=M_0(\epsilon)$ is not normally hyperbolic at $\epsilon=0$ but only satisfies the weaker condition
 \begin{eqnarray}
 \lambda \in \sigma\left(\partial_y Y(x,y)\vert_{M_0(\epsilon)}\right) \Rightarrow \vert \lambda\vert \ge \lambda_0>0,\eqlab{M0fast}
 \end{eqnarray}
 then in general there exists no invariant manifold near $M_0$ due to resonances \cite{man1,geller1,geller2,lor3}. Note that the condition \eqref{M0fast} implies that $M_0=M_0(\epsilon)$ can be written as a graph $y=\eta_0(x)$. The condition \eqref{M0fast} is the meaning of $y$ being fast \cite{mac1}. In analytic systems, however, the destruction by resonances only manifests itself in exponentially small error terms \cite[Lemma 1]{nei87}, \cite{kri3}. Indeed the following statement holds true: {There exists an $\epsilon_0$ such that for all $\epsilon\le \epsilon_0$ there exists a graph $M=\{y=\eta(x)\}$ which is exponentially close $\mathcal O(e^{-c/\epsilon})$ to being invariant.  Here $c$ is independent of $\epsilon$. Moreover, $M$ is $\epsilon$-close to $M_0=\{y=\eta_0(x)\}$. } If the slow-fast system is Hamiltonian then $M$ can be made symplectic on which a (formally) reduced Hamiltonian system can be defined. These statements hold true even when $X$ and $Y$ depend non-smoothly on $\epsilon$. {We will 
return to this in \secref{bg} 
where we also consider an example (\exmref{nex}) where $Y$ depends non-smoothly on $\epsilon$}. Even stronger results hold true in the case \eqref{M0fast} when considering normally elliptic $M_0$ with $\sigma(\partial_y Y(x,y)\vert_{M_0(\epsilon)})\subset i\R$ and one fast degree of freedom \cite{geller1}. One can then use averaging to obtain a whole foliation of 
exponentially accurate invariant manifolds. These are, however, not all slow and the averaging principle does not extend to several fast variables due to resonances between these. 

{The reference \cite{ioo1} considers a related scenario of an analytic vector-field near an equilibrium. The linearized system is assumed to be split in two invariant subspaces $E_0$ and $E_1$. Under certain diophantine conditions on the eigenvalues in $E_0$ the reference shows that there is a graph slaving the variables in $E_1$ to those in $E_0$ which is exponentially close to invariance. This result is local in the variables in $E_0$ \cite[Eq. (8), Theorem 1]{ioo1}. Besides the diophantine condition a crucial condition is, as for the references above, the requirement about analyticity. This condition cannot be relaxed. }

The results on slow manifolds motivate the development of reduction methods for the approximation of these invariant or almost invariant objects. \textit{The method of straightening out} (SO henceforth) used in \cite{kri3} is an example of a reduction method that successively provides better approximations to invariant manifolds. The method was suggested by MacKay in \cite{mac1} but it is identical to the method suggested by Fraser and Roussel in \cite{fra1,fra2}. In \cite{kap3} this method is also
 referred to as the iterative method of Fraser and Roussel. In \cite{gou1} it is called the invariance equation method. The use of different names is unfortunate but from our view-point, which is due to MacKay, we find SO more descriptive. MacKay's description also highlights properties that are usually not attributed to the method. The power of the SO method is four-fold. (i): It leads to exponential accurate slow manifolds. {(ii): It can written in a form  (see \eqref{fraser} below) that only involves the vector-field, hence avoiding the lengthy details of asymptotic expansions}. (iii): It does not require smoothness of $X$ and $Y$ in $\epsilon$. (iv): The slow manifold includes nearby equilibria. There are, however, several alternatives to the SO method. We name a few others: The intrinsic low-dimensional manifold (ILDM) method of Maas and Pope \cite{maa1}, the zero-derivative principle (ZDP) \cite{kap1,kap2}, and the computational singular perturbation (CSP) method initially due to Lam and Goussis \cite{lam1,lam2}, and later thoroughly analyzed by Zagaris and co-
workers \cite{kap4}. The ILDM method is based on the Jacobian of the vector-field and 
partitions this at each point into a fast and a slow component based on spectral gaps of the Jacobian. The ILDM approximation to the slow manifold is then defined as the locus of points where 
the vector-field lies entirely in the slow subspace. In general, this only gives an approximation that agrees up to $\mathcal O(\epsilon)$ \cite{kap3}. Nevertheless, the method is still quite powerful as it can be used in systems where a small parameter may not be directly available. In the ZDP method an $\mathcal O(\epsilon^n)$-accurate
approximation to the slow manifold is obtained as the locus of points where the $(n+1)$th time derivative of the fast variables vanishes. This method has been used in an equation-free setting in \cite{kap1}. The CSP method also provides $\mathcal O(\epsilon^n)$-approximations of the slow manifolds \cite{kap4} and it is, as the ILDM, based on the decomposition of the tangent space into fast and slow subspaces. 

The use of the CSP method is not restricted to problems where slow and fast variables have been properly identified as in \eqref{xys0}. Part of the outcome of the CSP method is the identification of fast and slow subspaces. This particularly means that when applying the CSP method to system \eqref{xys0}, it leads to an approximation of the tangent spaces  to the fibers through a set of basis vectors, see e.g. \cite{kap4}. In \cite{gou1} a ``CSP-like'' method is nevertheless developed as an extension of the SO method, it also being capable of identifying the fast and slow subspaces. It is shown \cite[App. A]{gou1} that this method leads to a more efficient algorithm when compared to the CSP method. On the other hand, this method does not enjoy the same properties as the usual SO method since, as the CSP method, it also requires higher order partial derivatives of the vector-field for improvements beyond leading order.  We will in this paper show that it is also possible to
approximate the tangent spaces of the fibers by adding an extra step to the SO method without introducing the need for higher order partial derivatives of the vector-field.



\subsection{Aims of the paper} The main aim of this paper is to present a simple procedure for the approximation of the tangent spaces of the fibers. We will extend the interpretation of this approximation so that it also has meaning for non-hyperbolic slow manifolds where Fenichel's theory does not apply. We follow similar lines as those developed in \cite{rob89,ajr1} approximating related spaces in systems near equilibria. We will refer to this method as the \textit{SOF method} - the extra \textit{F} has been added to SO to indicate that the approximation of the fiber directions is build in as an extension of the original SO method. 
The extension will enjoy similar properties to the traditional SO method. (i): It leads to exponential estimates. (ii): It only involves the vector-field and its Jacobian, in contrast to e.g. the CSP method. (iii): It does not require smoothness of $X$ and $Y$ in $\epsilon$. (iv): The spaces are exact at equilibria (see also remark \remref{mu1prop} below for further clarification on this part). Moreover, we will extend our technique to approximate curvatures.



\subsection{Applications} 
As opposed to \cite{kri3} the applications we have in mind are primarily
for normally hyperbolic slow manifolds, where the fibers provide the
directions of the stable and unstable manifolds along which the solutions
relax to respectively escape the slow manifold. However, our results in
\thmref{thm1} still hold true for e.g. the normally elliptic case by providing
coordinates in which the slow dynamics become \textit{almost} independent
of the fast variables to linear order (see also \eqref{fintp} for further clarification). {We highlight that a related scenario is considered in \cite{rob89} which considers dynamics near equilibria but does not restrict to normally hyperbolic center manifolds. Indeed, the results of \cite{rob89} apply to invariant manifolds arising from other means such as those from Lyapunov center theorem in Hamiltonian systems and the almost invariant ones described by \cite{ioo1}. } 
%

{We have in \cite{kri4} begun an analysis of the numerical implementation of the SOF method for the computation of orbits connecting to and departing from canard segments on saddle-type slow manifolds. Here direct integration is futile.  Such segments are covered by the Exchange Lemma \cite{jon1} and appear in many applications, such as the Fitz-Hugh-Nagumo model \cite{guck6} and the van-der Pol equations \cite{guck5}. The idea is to use the SOF method to obtain by truncation a splitting of the problem, allowing us to first compute the canard segment itself, and then follow this by computing the fast part initially connecting to it and finally departing from it, using collocation on the fast space only on short $\mathcal O(1)$-time scales. A nice property of this method is that it does not increase in complexity as $\epsilon$ decreases. By considering a model for reciprocal inhibition with two slow and two fast variables $n_s=2=n_f$, we have compared our results with the results 
from using the collocation method suggested in \cite{guc3}. This looks promising and we aim to submit \cite{kri4} in the near future. }
 
 \subsection{Structure of paper} 
 After introducing our notation we will in \secref{bg} provide more background on the topic and include short descriptions of the traditional SO method and its new extension. Then in \secref{main} we present our main results on the approximation of the tangent spaces of the fibers. The main theorem is proven in \secref{prvthm1}. We apply the results to the Michaelis-Menten-Henri model in \secref{appl} before we in \secref{curve} extend our method so that it also approximates the curvature of the fibers. In principle higher order
effects can also be accounted for, but this introduces a certain degree of
complexity. In this paper we will therefore focus most of our effort on
demonstrating the first part of the method which seeks to estimate the
tangent spaces. Once this approach has been
established and demonstrated on the Michaelis-Menten-Henri model, we will
consider removing the part of the slow vector-field which is quadratic in
the fast variable, hence approximating the curvature of the fibers. One of
the reasons for choosing the Michaelis-Menten-Henri model as our example is
that all the calculations can be done explicitly. But moreover, it also
allows for {comparison} with the results in \cite{kap4} from the application
of the CSP method. {While our main focus will be on the standard slow-fast form \eqref{xys0}, we will nevertheless before our conclusion section, demonstrate on the Lindemann mechanism \cite{gou1} how the SOF method applies when the slow-fast system is not presented in slow-fast form \eqref{xys0}.} We also here compare our results with computations based on the CSP method.
 
 \subsection{Notation and preliminaries} 
We believe our result are best presented using sequences of transformations. We believe this makes the proofs simpler. In particular, the need for diminishing the domains becomes clearly apparent. Moreover, we believe that the method then fits nicely within normal form theory. See e.g. \cite{guc1} section 3.3, where one (using averaging) also seeks to remove a current error, the result of which is to introduce a new \textit{but smaller error}. On the downside, however, we have to deal with several different variables, being the consequences of the applications of these different successively defined transformations. 
For this we will use subscripts and superscripts on the variables. Below in \secref{bg}, where we present the SO method, we will, for example, successively introduce transformations of the form $y_{i+1}\mapsto y_i$. The slow variables will not be transformed during this step. {The purpose of each transformation $y_{i+1}\mapsto y_i$ is to push the resulting level set $y_{i+1}=0$ closer to invariance.} This sequence will be stopped at $y_{N_1}$. To avoid unnecessary clutter in the following we proceed by dropping the subscript $N_1$ and introduce $y\mapsto y_0$ as the composition of all the transformations $y_{i+1}\mapsto y_i$, $i=0,\ldots,N_1-1$. When we follow this by the new extension in \secref{sofsec} and \secref{prvthm1}, and thus start from $(x_0,y_{N_1})=(x_0,y)$, we introduce transformations $x_{i+1}\mapsto x_i$, $i=0,\ldots,N_2-1$, of the slow variables. The purpose of these transformations is to eliminate the linear dependency on $y$ in the slow part of the vector-field. The product of these 
finitely 
many transformations again generates a transformation $x_{N_2}\mapsto x_0$. Finally, in \secref{curve} we proceed by applying 
another sequence of transformations of 
the 
slow variables 
starting from $x_{N_2}$ that seek to remove terms quadratic in $y$ in the slow part of the vector-field. For simplicity, we will drop the subscript $N_2$ and replace it by a superscript $0$ so that $x^0=x_{N_2}$ is the starting point for our final iteration $x^{i+1}\mapsto x^i$, $i=0,\ldots,N_3-1$ (not to be confused with a power). 

Superscripts will also be used on the computed functions $\eta$,$\phi$ and $\psi$, the former describing the slow manifold, the latter two describing the fibers, to denote partial sums:
\begin{eqnarray}
 \eta^n = \sum_{i=1}^n \eta_i,\quad n\ge 1,\eqlab{etapn}
\end{eqnarray}
and
\begin{eqnarray*}
 \quad \phi^n = \sum_{i=0}^n \phi_i,\quad \psi^n = \sum_{i=0}^n\psi_i,\quad n\ge 0.
\end{eqnarray*}
We do not believe this will cause any unnecessary confusion as this notation will only be used on these three functions. 
The superscripts refer to the order of accuracy as $\eta^n$ e.g. will introduce a remainder of $\mathcal O(\epsilon^{n+1})$. The subscripts on the functions are similarly related to their order with respect to $\epsilon$:
\begin{eqnarray*}
 \eta_i = \mathcal O(\epsilon^i),\quad \phi_i = \mathcal O(\epsilon^i),\quad \psi_i =\mathcal O(\epsilon^i).
\end{eqnarray*}
Note that the sum in $\eta^n$ \eqref{etapn} starts from $i=1$ since our starting point (\eqref{xys1} below) will based on the deviations $y_0$ from $y=\eta_0(x)$. (See \remref{sofn} to appreciate the convenience of this choice). We finally point out that $\partial_{x_i}$ and $\partial_{x^i}$ will be denoted by $\partial_x$ and that we will use the
alternative notation $(x)_i$, $1\le i\le n$, for the $i$th component of a
vector $x\in \mathbb R^{n}$. 

Let $(\mathcal X,\Vert \cdot \Vert_{\mathcal X})$ and $(\mathcal Y,\Vert
\cdot \Vert_{\mathcal Y})$ be real Banach spaces, and $\mathcal X_{\mathbb
C}=\mathcal X\oplus i\mathcal X$ respectively $\mathcal Y_{\mathbb
C}=\mathcal Y\oplus i\mathcal Y$ their complexifications with norms $\Vert
x_1+ix_2\Vert_{\mathcal X_{\mathbb C}}=\Vert x_1\Vert_{\mathcal X}+\Vert
x_2\Vert_{\mathcal X}$ and $\Vert y_1+iy_2\Vert_{\mathcal Y_{\mathbb
C}}=\Vert y_1\Vert_{\mathcal Y}+\Vert y_2\Vert_{\mathcal Y}$. Here we are
primarily thinking of $\mathcal X$ and $\mathcal Y$ as Euclidean spaces.

We will from now on denote all norms,
including operator norms, by $\Vert \cdot \Vert$. 
Then $f:\mathcal U_{\mathbb C}\rightarrow
\mathcal Y_{\mathbb C}$, with $\mathcal U_{\mathbb C}$ an open subset of
$\mathcal X_{\mathbb C}$, is analytic if it is continuously differentiable.
That is if there exists a continuous derivative $\partial_x f:\mathcal
U_{\mathbb C}\rightarrow L(\mathcal X_{\mathbb C},\mathcal Y_{\mathbb C})$,
the Banach space of complex linear operators from $\mathcal X_{\mathbb C}$
to $\mathcal Y_{\mathbb C}$ equipped with the operator norm, satisfying the
following condition
\[\Vert f(x+h)-f(x)-\partial_x f(x)(h)\Vert = \mathcal O(\Vert h\Vert^2).\]
By real analytic we will mean analytic and real when the arguments are real. The higher order derivatives can be defined inductively and $\partial_x^n f$ becomes a map
\begin{eqnarray*}
 \partial_x^n f:\mathcal U_{\mathbb C}\rightarrow L^n(\mathcal X_{\mathbb C},\mathcal Y_{\mathbb C}),
\end{eqnarray*}
from $\mathcal U_{\mathbb C}$ into the Banach space $L^n(\mathcal X_{\mathbb C},\mathcal Y_{\mathbb C})$ of all bounded, $n$-linear maps from $\mathcal X_{\mathbb C}\times \cdots \times \mathcal X_{\mathbb C}$ ($n$ times) into $\mathcal Y_{\mathbb C}$. See \cite[App. A]{pos1} for a reference on analytic function theory in Banach spaces. 

When $\mathcal U$ is an open subset of $\mathcal X$ then we define $\mathcal U+i\chi$ to be the open complex $\chi$-neighborhood of $\mathcal U$:
\begin{eqnarray*}
 \mathcal U+i\chi = \{x\in \mathcal X_{\mathbb C}\vert d_{\mathcal X_{\mathbb C}}(x,\mathcal U) < \chi\},
\end{eqnarray*}
where $d_{\mathcal X_{\mathbb C}}$ is the metric induced from the Banach norm $\Vert\cdot \Vert$.

 We frequently need the following Cauchy estimate:
\begin{lemma}\lemmalab{cest} \cite{pos1}
Assume that $f:\mathcal U_{\mathbb C}\rightarrow \mathcal Y_{\mathbb C}$ is analytic and that $f$ is bounded on the $\mathcal X_{\mathbb C}$-open ball $B_\xi(x_0)\subset \mathcal U_{\mathbb C}$ centered at $x_0\in \mathcal U_{\mathbb C}$ and with radius $\xi< d_{\mathcal X_{\mathbb C}}(x_0,\partial \mathcal U_{\mathbb C})$. Then
\begin{eqnarray}
 \Vert \partial_x f(x_0)\Vert \le \frac{\sup_{x\in B_\xi(x_0) }\Vert f(x)\Vert}{\xi}.\eqlab{cest}
\end{eqnarray}
\end{lemma}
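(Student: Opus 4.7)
The plan is to reduce the Banach-space valued estimate to the classical one-dimensional Cauchy inequality by restricting $f$ to complex lines through $x_0$. First, I would fix an arbitrary direction $h\in \mathcal X_{\mathbb C}$ with $\Vert h\Vert = 1$ and introduce the auxiliary map $g:\{z\in\mathbb C\,|\,|z|<\xi\}\rightarrow \mathcal Y_{\mathbb C}$ given by $g(z) = f(x_0+zh)$. Since $x_0+zh\in B_\xi(x_0)\subset \mathcal U_{\mathbb C}$ whenever $|z|<\xi$, the chain rule ensures that $g$ is analytic on the open disk of radius $\xi$ and that $g'(0) = \partial_x f(x_0)(h)$. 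By hypothesis, $\Vert g(z)\Vert \le \sup_{x\in B_\xi(x_0)}\Vert f(x)\Vert$ for all such $z$.

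Next, I would apply the vector-valued Cauchy integral formula to $g$: for any $0 < r < \xi$,
\begin{eqnarray*}
g'(0) \;=\; \frac{1}{2\pi i}\oint_{|z|=r}\frac{g(z)}{z^2}\,dz,
\end{eqnarray*}
where the integral is understood as a Bochner/Riemann integral in $\mathcal Y_{\mathbb C}$. The cleanest way to justify this is by duality: for every continuous linear functional $\lambda\in L(\mathcal Y_{\mathbb C},\mathbb C)$ the composition $\lambda\circ g$ is a scalar-valued analytic function on the disk and thus satisfies the classical Cauchy formula, and a Hahn-Banach argument then transfers the identity to $\mathcal Y_{\mathbb C}$. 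Applying the standard length-times-supremum estimate to the contour integral yields
\begin{eqnarray*}
\Vert g'(0)\Vert \;\le\; \frac{1}{r}\sup_{|z|=r}\Vert g(z)\Vert \;\le\; \frac{1}{r}\sup_{x\in B_\xi(x_0)}\Vert f(x)\Vert.
\end{eqnarray*}

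Finally, I would let $r\nearrow \xi$ to obtain $\Vert \partial_x f(x_0)(h)\Vert \le \xi^{-1}\sup_{x\in B_\xi(x_0)}\Vert f(x)\Vert$, and take the supremum over all unit vectors $h\in \mathcal X_{\mathbb C}$ to arrive at the operator-norm bound \eqref{cest}. The only genuinely delicate point is justifying the Cauchy integral representation for the Banach-valued map $g$; this is standard in analytic function theory on Banach spaces (cf. \cite{pos1}) and is the step I would cite rather than prove from scratch, since the remainder of the argument is a direct transcription of the scalar Cauchy estimate.
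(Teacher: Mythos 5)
Your argument is correct: restricting $f$ to complex lines $z\mapsto f(x_0+zh)$ with $\Vert h\Vert=1$, applying the scalar Cauchy estimate (transferred to the $\mathcal Y_{\mathbb C}$-valued setting by Hahn--Banach duality, or equivalently by applying the classical inequality to $\lambda\circ g$ for $\Vert\lambda\Vert\le 1$ and taking suprema, which lets you bypass the vector-valued contour integral altogether), letting $r\nearrow\xi$ and then taking the supremum over unit $h$ gives exactly the operator-norm bound \eqref{cest}. The paper itself offers no proof---the lemma is quoted from \cite{pos1}---and the argument given there is essentially this same one-dimensional restriction argument, so your proposal matches the intended route.
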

\begin{remark}\remlab{cauchyx0}
Consider $f:\mathcal U+i\chi\rightarrow \mathcal Y_{\mathbb C}$ analytic and bounded. Then we can apply this estimate to any $x_0\in \mathcal U+i(\chi-\xi)$ to obtain:
\begin{eqnarray*}
\sup_{x_0\in \mathcal U+i(\chi-\xi)}\Vert \partial_x f(x_0)\Vert \le\frac{\sup_{x\in \mathcal U+i\chi}\Vert f(x)\Vert}{\xi},
\end{eqnarray*}
which we will write compactly as
\begin{eqnarray*}
\Vert \partial_x f\Vert_{\chi-\xi} \le\frac{\Vert f(x)\Vert_{\chi}}{\xi}.
\end{eqnarray*}
This is the form of Cauchy's estimate that we will be using. Similarly, we will by $\Vert \cdot \Vert_{\chi,\nu}$ denote the sup-norm taking over the domain $(\mathcal U+i\chi) \times (\mathcal V+i\nu)$ of $(x,y)$.

Note also that the norm on the left hand side of \eqref{cest} is the operator norm on $L(\mathcal X_{\mathbb C},\mathcal Y_{\mathbb C})$ of complex bounded linear operators, while the norm on the right hand side is the norm on $\mathcal Y_{\mathbb C}$.
\end{remark}

\begin{remark}
  We write a $m$-linear form such as $\partial_x^m f(x)\in
L^m(\mathcal X_{\mathbb C},\mathcal Y_{\mathbb C})$ evaluated diagonally
on $h\in \mathcal X$ as $\partial_x^m f(x)h^m$. With this notation 
Taylor's formula reads:
\begin{eqnarray*}
 f(x+h) &=& f(x)+\partial_x f(x)h+\cdots +\frac{1}{(n-1)!}\partial_x^{n-1} f(x)h^{n-1}\\
&\quad &+\int_0^1 \frac{(1-s)^{n-1}}{(n-1)!}\partial_x^n f(x+sh)h^n ds,\quad \text{whenever}\quad x+sh \in \mathcal U_{\mathbb C} \quad \text{for all}\quad s\in [0,1],\eqlab{Taylor}
\end{eqnarray*}
or more compactly as
\begin{eqnarray*}
 f(x+h) &=& f(x)+\partial_x f(x)h+\cdots +\frac{1}{(n-1)!}\partial_x^{n-1} f(x)h^{n-1}+\mathcal O(h^n),
\end{eqnarray*}
here introducing the big-O notation for the integral remainder, which is bounded by $$\frac{\Vert h \Vert^n}{n!}\sup_{0\le
s\le 1} \Vert \partial_x^n f(x+sh)\Vert.$$ 
\end{remark}

\section{Further background}\seclab{bg}

\subsection{The SO method}\seclab{somethod}


We consider \eqref{xys0} and assume that condition \eqref{M0fast} holds true. By the analytic implicit function theorem the set $M_0=\{(x,y)\vert Y(x,y)=0\}$ can therefore be represented as a graph $M_0=\{(x,y)\vert y=\eta_0(x)\}$ with $\eta_0=\eta_0(x)$ analytic. We then introduce $(x,y)=(x_0,y_0+\eta_0(x_0))$ to transform these equations into
\begin{eqnarray}
 \dot x_0 &=&\epsilon X_0(x_0,y_0)\equiv \epsilon X(x_0,\eta_0+y_0),\eqlab{xys1}\\
 \dot y_0 &=&Y_0(x_0,y_0) = \rho_0(x_0)+A_0(x_0)y_0+R_0(x_0,y_0) \equiv -\epsilon \partial_x \eta_0 X(x_0,\eta_0(x_0)+y_0)+Y(x_0,\eta_0(x_0)+y_0),\nonumber
\end{eqnarray}
$(\epsilon X_0,Y_0)$ denoting the new vector-field, and by Taylor expanding $Y_0$ about $y_0=0$ we identify the following relevant functions
\begin{eqnarray*}
\rho_0 &=&Y_0(x_0,0)= -\epsilon \partial_{x} \eta_0 X(x_0,\eta_0),\\
A_0&=&\partial_y Y_0(x_0,0)=\partial_{y} Y(x_0,\eta_0)-\epsilon \partial_{x} \eta_0 \partial_{y} X(x_0,\eta_0).
\end{eqnarray*}
The function $R_0=\mathcal O(y_0^2)$ is the remainder from the Taylor expansion. It can be described in the following way using the integral remainder formula
\begin{eqnarray*}
R_0(x_0,y_0) &=& \int_0^1 (1-s) \left(-\epsilon \partial_x  \eta_0 \partial_y^2  X(x_0,  \eta_0(x_0) +sy_0)+\partial_y^2   Y(x_0,  \eta_0(x_0)+sy_0)\right)y_0^2 ds = \mathcal O(y_0^2). 
\end{eqnarray*}
{Here $y_0$ describes displacements from the graph $M_0=\{(x,y)\vert y=\eta_0(x)\}$ which has now been transformed to $\{y_0=0\}$ and which we continue to denote by $M_0$.} See \figref{y0defeta1} (a). 
The manifold $M_0$ is not invariant since $Y_0\vert_{y_0=0} =\rho_0$, but it is close to being invariant as $\rho_0=\mathcal O(\epsilon)$ is small. Moreover, $y_0$ is \textit{fast} transverse to $y_0=0$ as {$\Vert A_0^{-1}\Vert \gg \epsilon$}, cf. \eqref{M0fast}, and $Y_0$ therefore ``varies'' $\mathcal O(1)$ with respect to displacements in $y_0$ from $y_0=0$.
\begin{figure}[h!]
\begin{center}
\subfigure[]{\includegraphics[width=.86\textwidth]{./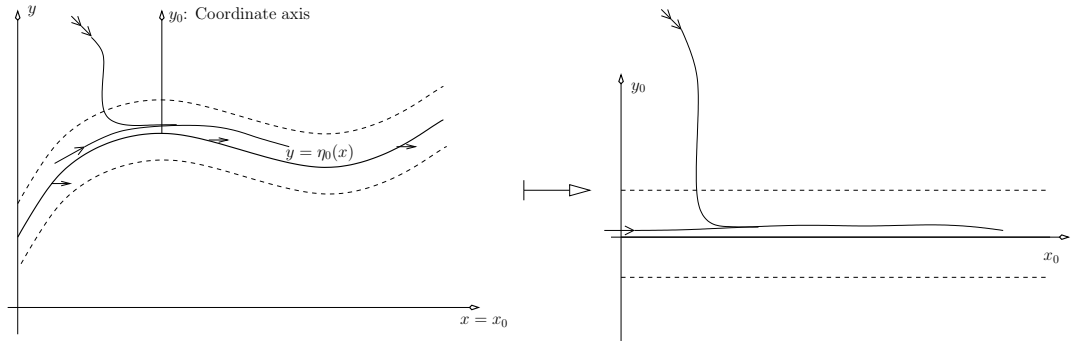}}
\subfigure[]{\includegraphics[width=.5\textwidth]{./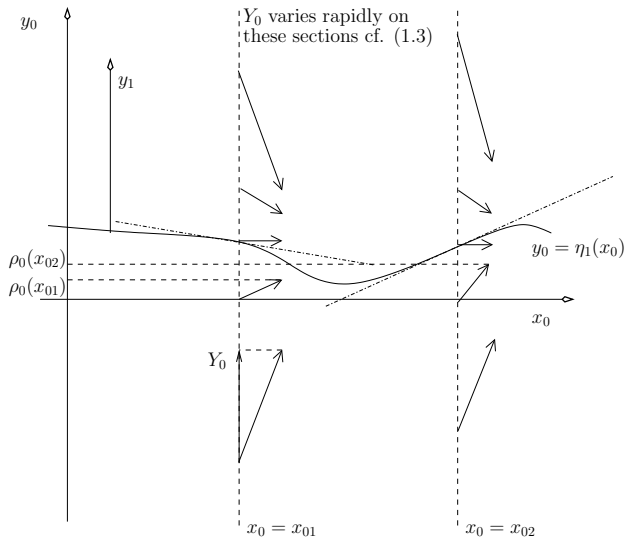}}
\end{center}
\caption{(a): \textit{Straightening out} $\eta_0$. The geometrical interpretation of $y_0$ as the deviation from $y=\eta_0(x)$. Note how, by definition, the vector-field restricted to $y=\eta_0(x)$ (indicated by small vectors) only points in the $x$-direction. (b): The geometrical interpretation of the SO method. On each vertical section $x_0=\text{fixed}$ we can solve for $Y_0=0$ with respect to $y_0$. This gives $\eta_1=\eta_1(x)$. We have also provided the interpretation of $\rho_0$ on the ordinate. The new error $\rho_1$, obtained after having straightened out $\eta_1$ through the introduction of $y_1$ as the deviation from $\eta_1$, is due to the fact that the vector-field on $y_0=\eta_1(x_0)$ is not in the tangent space (indicated by the dash-dot lines).}
\figlab{y0defeta1}
\end{figure}
{The SO method is iterative, successively introducing $y_i$'s by considering normal forms of the form
\eqref{xys1} at each step of the iteration. The slow variables $x_0$ will not be transformed during this iteration.} To complete the first step of
the iteration, consider the equation $Y_0(x_0,y_0)=0$, with $Y_0=Y_0(x_0,y_0)$ as in
\eqref{xys1}. This gives, by applying the analytic implicit function theorem, an analytic
solution $y_0=\eta_1(x_0)$ close to 
\begin{eqnarray}
\eta_1(x_0) \approx -A_0(x_0)^{-1}\rho_0(x_0),\eqlab{eta1}
\end{eqnarray}
since $R_0(x_0,y_0)=\mathcal O(y_0^2)$. The graph
$M_1=\{y_0=\eta_1(x_0)\}$ will be an improved slow manifold. To show that
this is indeed an improved slow manifold, one \textit{straightens out} the
new slow manifold by introducing $y_1$ through {$y_0=y_1+\eta_1(x_0)$}. Then the
equations become
\begin{eqnarray*}
 \dot x_0= \epsilon X_1(x_0,y_1),\quad \dot y_1 &=& Y_1(x_0,y_1)=\rho_1(x_0) + A_1(x_0) y_1 + R_1(x_0,y_1),
\end{eqnarray*}
with, much as before, $X_1(x_0,y_1)=X_0(x_0,y_1+\eta_1(x_0))$ and $Y_1(x_0,y_1)=-\epsilon \partial_x \eta_1 X_0(x_0,y_1+\eta_1(x_0))+Y_0(x_0,y_1+\eta_1(x_0))$, and by way of Taylor expanding $Y_1$ about $y_1=0$ we identify the new relevant functions
\begin{eqnarray}
 \rho_1(x_0) &=& -\epsilon \partial_x \eta_1 X_0(x_0,\eta_1(x_0)),\eqlab{rho11}\\
 A_1(x_0)&=&\partial_{y} Y_1(x_0,\eta_1(x_0))=-\epsilon \partial_{x} \eta_1(x_0) \partial_{y} X_0(x_0,\eta_1(x_0)) + \partial_y Y_0(x_0,\eta_1(x_0)),\nonumber
\end{eqnarray}
and $R_1=\mathcal O(y_1^2)$ as the remainder. Therefore formally $\rho_1=\mathcal O(\epsilon^2)$, since $\eta_1 = \mathcal
O(\epsilon)$ according to \eqref{eta1}; an
improvement from $\mathcal O(\epsilon)$ to $\mathcal O(\epsilon^2)$. More accurately:
\begin{eqnarray}
 \Vert \rho_1 \Vert_{C} \le c\epsilon \Vert \rho_0\Vert_{C^1},\quad c>0,\eqlab{rho1est}
\end{eqnarray}
using \eqref{eta1}. Here the subscripts $C$ and $C^1$ are for continuous respectively continuously differentiable functions. 
See \figref{y0defeta1} (b) for a geometrical interpretation. Continuing in this way, at each
step solving
$Y_i(x_0,y_i)=0$ for $y_i=\eta_{i+1}(x_0)$ and then setting
$y_{i}=y_{i+1}+\eta_{i+1}(x_0)$, we obtain an improved error at the end of
each step which is an $\mathcal O(\epsilon)$-multiple of a $C^1$-estimate of the previous error, cf. \eqref{rho1est},
directly leading to the formal $\mathcal O(\epsilon^{n})$-estimates. Making use of the analyticity, that allows for the application of Cauchy estimates to properly estimate the right hand side of \eqref{rho1est},  \cite{kri3} obtained the exponential estimates $\mathcal O(e^{-c/\epsilon})$ using $n=\mathcal O(\epsilon^{-1})$ steps. 

The SO method is identical to the method suggested
by Fraser and Roussel \cite{fra1,fra2,kap3}. This can be
realized by introducing the partial sum $\eta^n=\sum_{i=1}^{n} \eta_i$ and
expanding $Y_{n-1}(x_0,\eta_{n}(x_0))$ as
\begin{eqnarray*}
 Y_{n-1}(x_0,\eta_{n}(x_0)) &=& -\epsilon \partial_{x} \eta_{n-1} X_{n-2}(x_0,\eta_{n-1} +\eta_n) + Y_{n-2}(x_0,\eta_{n-1}+\eta_n)\\
&=&-\epsilon \partial_{x} \eta_{n-1} X_{n-3}(x_0,\eta_{n-2}+\eta_{n-1} +\eta_n) -\epsilon \partial_{x} \eta_{n-2} X_{n-3}(x_0,\eta_{n-2}+\eta_{n-1} +\eta_n)\\
&\quad& + Y_{n-3}(x_0,\eta_{n-2}+\eta_{n-1}+\eta_n)=\cdots\\
&=&-\epsilon \partial_x \eta^{n-1} X_{0}(x_0,\eta^n) + Y_0(x_0,\eta^n). 
\end{eqnarray*}
The equation:
\begin{eqnarray}
-\epsilon \partial_x \eta^{n-1} X_{0}(x_0,\eta^n) +
Y_0(x_0,\eta^n)=0,\eqlab{fraser}
\end{eqnarray}
defines the $n$th step of Fraser and Roussel's iterative method in which one solves for an improved slow manifold $\eta^n$, see \cite{fra1,fra2} and \cite{kap3} where an asymptotic analysis of the method is given. The reference \cite{kap3} does not, however, obtain exponential estimates. Equation \eqref{fraser} with $\eta^{n-1}=\eta^n=\eta$ is the \textit{invariance equation}:
\begin{eqnarray}
 -\epsilon \partial_x \eta X_{0}(x_0,\eta) +
Y_0(x_0,\eta)=0,\eqlab{inveqn}
\end{eqnarray}
for the invariance of $y=\eta(x)$. When the method is viewed within MacKay's setting we can also realize that we can in principle allow $A_0$ to be an unbounded operator: $A_0$ is never measured and therefore it is only necessary to assume that $A_0(x)^{-1}$ is bounded, making the approach potentially useful for partial differential equations. Note that $\rho_1$ vanishes at a true equilibrium where $X_0(x_0,\eta_0)=0$, and the improved slow manifold $M_1=\{y_1=0\}$ therefore includes all equilibria near $M_0$. This property is preserved when using the method iteratively, {and holds regardless of whether \eqref{M0hyp} is satisfied or not; we only need \eqref{M0fast} which is weaker. The importance of this property, as also noted by MacKay \cite{mac1}, is, however, due to the fact that when \eqref{M0hyp} is satisfied then Fenichel's theory, and even the general theory of normally hyperbolic invariant manifolds \cite{murd1,har1,tak1}, guarantees that equilibria are contained within the slow manifold. The general theory further guarantees that the slow manifold contains all nearby invariant sets such as limit cycles and strange attractors. This is obviously not true for the, in general, non-invariant approximation obtained from \eqref{inveqn}.}

{The viewpoint we take in this paper is slightly different from that taken in Fenichel's work. We do not directly connect with $\epsilon=0$. Instead we simply think of $0<\epsilon\ll 1$ as being fixed, much as in \cite{bat1,mac1,nei87}, and describe a procedure how to go from a slow manifold, which is close to being invariant, to one that is even closer to invariance. Indeed, it is clear that we do not need to start the procedure from $\eta_0$. Instead we could start from some guess $\zeta_0$. We would then straighten out this graph by setting $y=\zeta_0(x_0)+y_0$ and write up the corresponding $\rho_0$, $A_0$ and $R_0$. Provided this guess is good enough, i.e. $\rho_0$ is sufficiently small, then the new error will still be of the form $\epsilon \times \Vert \rho_0\Vert_{C^1}$ \eqref{rho1est} and the process can successfully be iterated until exponentially accuracy has been reached. We believe that this viewpoint is appropriate as in many systems an $\epsilon$ is not directly available, see e.g. \cite{templatorref,
bro2}, yet they demonstrate slow-fast behavior. 
Fraser and Roussel have reported convergence problems when the candidate $\zeta_0$ is far from $\eta_0$. Different methodologies have been proposed to overcome this problem \cite{naf1,fra3,dav1}. We re-iterate that one of the key properties of the extension of the SO method we present in this paper, is that it can be formulated in a similar fashion to \eqref{fraser} (see \eqref{phii} below) only involving the vector-field and its first partial derivatives with respect to $x$ and $y$. However, the extension does not share the potential convergence issues related to the SO method as this iterative procedure, involving only linear equations, is successfully initiated by $0$.  }



{The purpose of the following example is three-fold: (i): It demonstrates the use of the method on a system with non-smooth $\epsilon$ dependency. (ii): For this example one can directly compute the SO-generated slow manifold and show that, in accordance with the theory, it is exponentially close to invariance. At the same time one can integrate the equations and show that this is optimal. (iii): It is an example of an ``almost'' invariant normally \textit{elliptic} slow manifold. }
{\begin{example}\exmlab{nex}
This is a modified version of Neishtadt's example, see e.g. \cite{geller1}, on $(x_0,y_0)\in S^1\times \mathbb R^2$:
 \begin{eqnarray*}
  \dot x_0 &=&\epsilon,\\
  \dot y_0 &=& Y_0(x_0,y_0)=\begin{pmatrix}
            \overline{\epsilon} f_{\lfloor \epsilon^{-1} \rfloor}(x_0)\\
            0
           \end{pmatrix}+\begin{pmatrix} 
             0 & 1\\
             -1 &0
            \end{pmatrix}y_0,
 \end{eqnarray*}
 where $$f_{\lfloor \epsilon^{-1} \rfloor}(x_0) = \sum_{k=1}^{\lfloor \epsilon^{-1} \rfloor} e^{-k} \sin (kx_0),\quad \overline{\epsilon} = \epsilon(1+\sin(2\pi \epsilon^{-1})).$$ {Here $S^1=\mathbb R/(2\pi \mathbb Z)$}.
In contrast to Neishtadt, who considered $\overline{\epsilon}=\epsilon$ and the analytic function $f=f_\infty$, we consider the non-smooth versions $\overline{\epsilon}=\overline{\epsilon}(\epsilon)$ and the partial sum $f_{\lfloor \epsilon^{-1} \rfloor}$
 and verify the results of \cite{kri3} on the SO method for a system with a non-smooth dependency on $\epsilon$. The non-smoothness enters through $\sin(2\pi \epsilon^{-1})$ and the greatest integer function: $${\lfloor  \epsilon^{-1} \rfloor }=\max \{N\in \mathbb N\vert N\le \epsilon^{-1}\}.$$ 
 We will not transform the slow variable $x_0$ here so we drop the subscript $0$.

 First we notice that $A_0 =\begin{pmatrix} 
             0 & 1\\
             -1 &0
            \end{pmatrix}$ with eigenvalues $\pm i$. The critical manifold $\{y_0=0\}$ is therefore normally elliptic, not hyperbolic. Also $\rho_0=\begin{pmatrix}
            \overline{\epsilon} f_{\lfloor \epsilon^{-1} \rfloor}(x)\\
            0
           \end{pmatrix}$ which is $\mathcal O(\epsilon)$ since $\Vert \overline{\epsilon}\Vert\le 2\epsilon$ and $\Vert f_{\lfloor\epsilon^{-1}\rfloor}\Vert\le \sum_{k=1}^\infty e^{-k}=\frac{1}{e-1}$.
            
            Applying the SO method once gives $\eta_1$ with
 \begin{eqnarray*}
  \eta_1(x) =  \begin{pmatrix}0\\-\overline{\epsilon} f_{\lfloor\epsilon^{-1}\rfloor}(x)\end{pmatrix}=\mathcal O(\epsilon)
 \end{eqnarray*}
as the solution of $Y_0(x,\eta_1)=0$. 
Next, we set $y_0=y_1+\eta_1(x)$ and 
$$Y_1(x,y_1) = -\epsilon \partial_x \eta_1(x) + Y_0(x,y_1+\eta_1(x))=\begin{pmatrix}
0\\
            \epsilon \overline{\epsilon} f_{\lfloor \epsilon^{-1} \rfloor}'(x)\\
                     \end{pmatrix}+\begin{pmatrix} 
             0 & 1\\
             -1 &0
            \end{pmatrix}y_1.$$
Hence $\rho_1 = \begin{pmatrix}
0\\
            \epsilon \overline{\epsilon} f_{\lfloor \epsilon^{-1} \rfloor}'(x)\\
                     \end{pmatrix}=\mathcal O(\epsilon^2)$ and the solution of $Y_1(x,\eta_2)=0$ is 
 \begin{eqnarray*}
  \eta_2(x) =  \begin{pmatrix}\epsilon \overline{\epsilon} f_{\lfloor \epsilon^{-1} \rfloor}'(x)\\
  0.\end{pmatrix}
 \end{eqnarray*}
Proceeding in this way one can verify the following results:
\begin{eqnarray}
 \eta_n&=& \left\{\begin{array}{cc}
                      \begin{pmatrix}
                       (-1)^{\lfloor n/2\rfloor+1} \epsilon^{n-1} \overline{\epsilon} f^{(n-1)}_{\lfloor\epsilon^{-1}\rfloor}(x)\\
                       0
                      \end{pmatrix}
  & \mbox{if $n$ is even},\\
                                            \\
                        \begin{pmatrix}
                         0\\
                         (-1)^{\lfloor n/2\rfloor+1} \epsilon^{n-1} \overline{\epsilon}f^{(n-1)}_{\lfloor\epsilon^{-1}\rfloor}(x) 
                        \end{pmatrix}
                       & \mbox{if $n$ is odd}
                     \end{array}   \right.= \mathcal O(\epsilon^n) \nonumber
\end{eqnarray}
for $n\ge 3$. The error $\rho_n=-\epsilon \partial_x \eta_n$ is then given as
\begin{eqnarray*}
 \rho_n = \left\{\begin{array}{cc}
                  \begin{pmatrix} 0\\(-1)^{\lfloor n/2\rfloor} \epsilon^{n} \overline{\epsilon} f^{(n)}_{\lfloor\epsilon^{-1}\rfloor}(x)\end{pmatrix}&\mbox{if $n$ is even}\\
                  \\
                  \begin{pmatrix}(-1)^{\lfloor n/2\rfloor} \epsilon^{n} \overline{\epsilon}f^{(n)}_{\lfloor\epsilon^{-1}\rfloor}(x)\\
                   0
                  \end{pmatrix}
&\mbox{if $n$ is odd}
                 \end{array}\right.=\mathcal O(\epsilon^{n+1}).
\end{eqnarray*}
We now estimate $\rho_n$:
\begin{eqnarray*}
 \Vert \rho_n\Vert &\le \epsilon^{n} \Vert \overline{\epsilon}\Vert  \sum_{k=1}^{\lfloor \epsilon^{-1} \rfloor} e^{-k} k^{n}\le 2\epsilon^{n+1} \sum_{k=0}^\infty e^{-k} k^{n}\le 2\epsilon^{n+1} \int_0^\infty e^{-k} k^n dk= 2\epsilon^{n+1} n!
\end{eqnarray*}
We can then apply Stirling's approximation for $n!$ to obtain
\begin{eqnarray*}
 \Vert \rho_n\Vert \le 2e\epsilon^{n+1} n^{n+1/2} e^{-n}.
\end{eqnarray*}
Setting $n= \lfloor\epsilon^{-1}\rfloor$ gives an exponential estimate:
\begin{eqnarray*}
 \Vert \rho_{\lfloor\epsilon^{-1}\rfloor} \Vert \le 2e\epsilon^{\lfloor\epsilon^{-1}\rfloor} \lfloor\epsilon^{-1}\rfloor^{ \lfloor\epsilon^{-1}\rfloor+1/2} e^{- \lfloor\epsilon^{-1}\rfloor}\le 2e\epsilon^{-1/2} e^{-\lfloor\epsilon^{-1}\rfloor}\le 2e^2 \epsilon^{-1/2} e^{-\epsilon^{-1}}.
\end{eqnarray*}
Note that the graph $y_0=\eta^{\lfloor\epsilon^{-1}\rfloor}(x)=\sum_{i=1}^{\lfloor\epsilon^{-1}\rfloor} \eta_i(x)$ is also non-smooth in $\epsilon$. This is in general also the case when the original $\epsilon$-dependency is smooth, as in the classical averaging scenarios considered in e.g. \cite{geller1,nei2,nei3}. Note finally that the result is global in the slow variable $x\in S^1$. 

One cannot improve the estimate beyond an exponential one due to the resonance appearing for $\epsilon^{-1}=N\in \mathbb N$. This can easily be seen by 
introducing the complex variable $p=(y_0)_1+i(y_0)_2$ so that 
\begin{eqnarray*}
  \dot p &=& -ip + N^{-1} f_{N}(x),
  \end{eqnarray*}
 where $\overline{\epsilon}=N^{-1}(1+\sin(2\pi N))=N^{-1}$, and using that $x=N^{-1} t$ after possibly a translation of time.

\end{example}

}


\subsection{Normally hyperbolic slow manifolds and their fibers} 
To explain our extension of the SO method we first need to explain Fenichel's theory a bit further. We will base our discussion on \eqref{xys1} and $M_0=\{y_0=0\}$. For this Fenichel assumed that $X_0$ and $Y_0$ depended smoothly on $\epsilon$ and that $M_0$ satisfies the hyperbolicity condition \eqref{M0hyp}. Then the stable and unstable manifolds persist. To explain this, consider at $\epsilon=0$ the fast fiber $F_0^{z_0}=\{(x_0,y_0)\vert \Vert y_0\Vert \le \Delta \}$, with $\Delta>0$ potentially small because of the localness in $y_0$, based at the point $z_0=(x_0,0)$. If the real parts of the eigenvalues of $A$ are all negative, then $M_0\vert_{\epsilon=0}$ is asymptotically stable for $\epsilon=0$ and all solutions on $F^{z_0}_0$ contract exponentially toward the base point $z_0$. {By Fenichel's theory \cite{fen2}, \cite[Theorem 3, p. 20]{jon1} the fast fibers $F_0^{z_0}$ perturb to $F_\epsilon^{z_0}$.
These different fibers $F_\epsilon^{z_0}$ form a family $\{F_\epsilon^{z_0}\}_{z_0\in M}$ which is invariant in the following sense
\begin{eqnarray*}
 \Phi_0^t(F_\epsilon^{z_0}) \subset F_\epsilon^{\Phi_0^t (z_0) },
\end{eqnarray*}
where $\Phi_0^t$ is the time-$t$ flow map of \eqref{xys1}}. The motion of any point $z=(x,y)\in F_\epsilon^{z_0}$ therefore decomposes into a fast contracting component and a slow component governed by the motion of the base point $z_0$ of the fiber. The assignment $z\mapsto z_0$ is called the fiber projection and we denote it by $\pi_f$. In the physics literature a fiber is also sometimes called an isochron \cite{ajr2}. 

The fiber projection $\pi_f$ is smooth, and so locally there exists a transformation $(u,v)\mapsto (x_0,y_0)$, which is $\epsilon$-close to the identity, mapping \eqref{xys1} into the Fenichel normal form, explained in e.g. \cite[Eq. (3.21), p. 41]{jon1}:
\begin{eqnarray*}
 \dot u &=&\epsilon U(u),\quad
\dot v=V(u,v)v.
\end{eqnarray*}
These are the ideal coordinates for the description of the system near the slow manifold; the slow manifold coincides with the zero level set $\{v=0\}$ and the fibers of the form $F_\epsilon^{(u_b,0)}$, based at $(u,v)=(u_b,0)$, have been straightened out to $\{(u,v)\vert u=u_b,\,\Vert v\Vert \le \Delta\}$. In particular, the matrix $V$ has eigenvalues with purely negative real part. We will approach this ideal by first constructing a transformation $(x,y)\mapsto (x_0,y_0)$ so that the $x$-equation, up to exponentially small error terms, becomes independent of $y$ to linear order:
\begin{eqnarray}
 \dot x & = \epsilon (\Lambda(x)+\mathcal O(y^2))+\mathcal O(e^{-c/\epsilon}),\eqlab{fintp}\\
 \dot y & = A(x)y+\mathcal O(y^2)+\mathcal O(e^{-c/\epsilon}).\nonumber
\end{eqnarray}
 Then the tangent space to the fibers based at $(x,y)=(x_b,0)$ will coincide with $\{(x,y)\vert x=x_b\}$ for all $x_b\in \mathcal U$ (the subscript $b$ is for base) up to exponentially small terms. Later we will also seek to remove the terms that are quadratic in $y$. We will see that we do not need smoothness of $X_0$ and $Y_0$ in $\epsilon$ to construct this transformation. Also we {will} replace the hyperbolicity condition \eqref{M0hyp} by the weaker \textit{fastness condition} \eqref{M0fast}. 

When $M$ is of saddle type, with a stable manifold $W^s(M)$ of dimension $n_f^s$ and an unstable manifold $W^u(M)$ of dimension $n_f^u$ ($n_f=n_f^s+n_f^u$), then Fenichel's normal form takes a slightly different form \cite[Eq. (3.21), p. 41]{jon1}: There exists a transformation $(u,v,w)\mapsto (x_0,y_0)$, with $\text{dim}\,\{v\}={n_f^s}$ and $\text{dim}\,\{w\}={n_f^u}$, which is $\epsilon$-close to the identity, mapping \eqref{xys1} into
\begin{eqnarray}
         \dot u &=&\epsilon (U_0(u)+U_1(u,v,w)vw),\nonumber\\
\dot v&=&V(u,v,w)v,\eqlab{fnf2}\\
\dot w&=&W(u,v,w)w.\nonumber
\end{eqnarray}
Here $U_1(u,v,w):\{v\}\times \{w\}\rightarrow \mathbb R^{n_s}$ is a
bilinear function of $v$ and $w$. The slow manifold is then given by
$\{v=0,\,w=0\}$ with stable manifold $\{w=0\}$ and unstable manifold
$\{v=0\}$. The transformation may only exist in a small neighborhood of the
slow manifold so in general we need $\Vert v\Vert \le \Delta_v$ and $\Vert
w\Vert\le \Delta_w$.

\section{The SOF method}\seclab{sofsec}
In this section we shortly describe our method for approximating the tangent spaces of the fibers. 
Following the
$\mathcal O(\epsilon^{-1})$ applications of the SO method we can start from the real analytic slow-fast system:
\begin{eqnarray*}
 \dot x_0 &=&\epsilon X(x_0,y)=\epsilon (\Lambda(x_0)+\mu_0(x_0)y + T_0(x_0,y)),\\ \dot y& =&Y(x_0,y)=\rho(x_0)+A(x_0)y+R(x_0,y),
\end{eqnarray*}
with $\rho=\mathcal O(e^{-c/\epsilon})$ \cite{kri3} describing the error-field on $\{y=0\}$ and $R=\mathcal
O(y^2)$. Moreover, $\mu_0=\partial_y X(x_0,0)$ and $T$ is the $\mathcal O(y^2)$ remainder from the Taylor expansion of $X$ about $y=0$. For the purpose of
obtaining exponential estimates, we can ignore $\rho$
completely. We shall return to this later. We will assume that there are $n_s$ slow variables $x\in \mathbb
R^{n_s}$ and $n_f$ fast variables $y\in \mathbb R^{n_f}$. The aim is to introduce a succession of transformations of the
form $x_i = x_{i+1}+\epsilon \phi_{i}(x_{i+1})y$ formally pushing the term $\mu_0 y$
in $\epsilon^{-1}\dot x_{0}$ which is linear in $y$ to consecutive
higher orders in $\epsilon$. Let us consider the first step, introducing $x_0=x_1+\epsilon \phi_0(x_1)y$ so that
\begin{eqnarray}
 \dot x_1& =& J^{-1}\left(\epsilon \Lambda+\epsilon \left\{ \epsilon \partial_x \Lambda \phi_0+\mu_0 -\phi_0 A\right\}y+\epsilon \mathcal O(y^2)\right)\nonumber \\
 &=&\epsilon \left(\Lambda+\left\{\epsilon \partial_x \Lambda \phi_0+\mu_0 -\phi_0 A\right\}y-\epsilon\partial_x \phi_0 \Lambda y+\mathcal O(y^2)\right)
\eqlab{xp}
 \end{eqnarray}
 where $J=I_s+\epsilon \partial_{x} \phi_0 y$, $I_s=\text{identity}\in \mathbb R^{n_s\times n_s}$, is the Jacobian of the transformation $x_1\mapsto x_0$, and where we have used the identity
 \begin{eqnarray*}
  J^{-1} = I_s-\epsilon \partial_{x} \phi_0 y+J^{-1}(\epsilon \partial_x \phi_0 y)^2.
 \end{eqnarray*}
 All functions on the right hand side of \eqref{xp} depend on $x_1$, a dependency we for simplicity here have suppressed. 
The term in \eqref{xp} which is linear in $y$ is due to two contributions. The first one is due to the expansion of $X(x_0,y)- \phi Y(x_0,y)$ in $y$, the curly bracket in \eqref{xp}, while the second one:
\begin{eqnarray}
\mu_1 &=&-\epsilon \partial_{x} \phi_0\Lambda,\eqlab{mu1}
\end{eqnarray}
 comes from applying the inverse of the Jacobian. Here $\partial_{x} \phi_0 \Lambda$ is understood column-wise:
\begin{eqnarray*}
 \partial_{x} \phi_0 \Lambda = \left(\partial_{x} (\phi)^1 \Lambda \cdots \partial_{x} (\phi)^{n_f} \Lambda\right),
\end{eqnarray*}
$(\phi)^i=(\phi)^i(x_0)\in \mathbb R^{n_s}$ being the $i$th column of $\phi=\phi(x_0)\in \mathbb R^{n_s\times n_f}$. We let $\phi_0$ be the solution to the linear equation obtained by setting the first contribution, the curly bracket in \eqref{xp}, to zero: 
\begin{eqnarray*}
\epsilon \partial_x \Lambda \phi_0+\mu_0 -\phi_0 A = 0.
\end{eqnarray*}
This equation has a solution $\phi_0$ close to $\mu_0 A^{-1}$, and the new error term $\mu_1$ \eqref{mu1}, which by construction is the only remaining term in $\epsilon^{-1}\dot x_1$ linear in $y$, is therefore formally smaller than the old error $\mu_0$. There is an improvement from $\mathcal O(1)$ to $\mathcal O(\epsilon)$. Note also that
\begin{eqnarray*}
 \Lambda_1 = \Lambda,\quad A_1=A,
\end{eqnarray*}
and in particular $\mu_1$ then vanishes at all equilibria $(x,y)=(x_e,0)$ since there $\Lambda(x_e)=0$. We will use these types of transformations successively in the proof, pushing the error term to higher order in $\epsilon$. One of the main results of the paper is that eventually the error is exponentially small: $\mu = \mathcal O(e^{-c/\epsilon})$. {Again we stress that the system is assumed to be analytic.} We present the first result formally in \thmref{thm1} which we prove in \secref{prvthm1}. In \secref{curve} we present a result, \thmref{thm2}, on approximation of the curvature of the fibers. \thmref{thm2} excludes normally elliptic slow manifolds and neutral saddle-type slow manifolds where both $\lambda$ and $-\lambda$, $\text{Re}\,\lambda\ne 0$, are eigenvalues of $A$. This requirement appears in the construction of the appropriate transformations, where we encounter linear matrix equations of the form:
\begin{eqnarray*}
 A^T \psi^i + \psi^i A=Q^i,
\end{eqnarray*}
for the unknown matrices $\psi^i$. Solutions of this linear problem
exist and are unique if and only if $\sigma(A)\cap \sigma(-A) =
\emptyset$, see \cite[Theorem 4.4.6]{hor1}. The case where both $\lambda$ and $-\lambda$ are eigenvalues of the $A$ leads to small
divisors, as in the problem of analytic linearization \cite{guc1}. We should mention that small divisors are not necessarily an immovable obstruction, see e.g. \cite{yoc1} for the problem of analytic linearization, \cite{pos2} for KAM theory, and \cite[p.26]{chi1} for the Hartman-Grobman theorem. Still, trying to remove such an obstruction, is not within the scope of this work.

\section{Main results}\seclab{main}
We consider the real analytic slow-fast system \eqref{xys1} in the form
 \begin{eqnarray}
 \dot x_0 &=&\epsilon X_0(x_0,y_0)=\epsilon (\Lambda_0(x_0)+\mu_0(x_0)y_0 + T_0(x_0,y_0)),\eqlab{xys12}\\ 
\dot y_0 &=&Y_0(x_0,y_0)=\rho_0(x_0)+A_0(x_0)y_0+R_0(x_0,y_0), \nonumber\\
R_0&&\hspace{-.5cm}(x_0,y_0),\,T_0(x_0,y_0) = \mathcal O(y_0^2),\nonumber
\end{eqnarray}
with $n_s$ slow variables and $n_f$ fast ones so that $x_0\in \mathcal U+i\chi_0 \subset \mathcal X_{\mathbb C} = \mathbb C^{n_s}$ and $y_0\in \mathcal V+i\nu_0\subset \mathcal Y_{\mathbb C} = \mathbb C^{n_f}$. Here $\mathcal U\subset \mathcal X=\mathbb R^{n_s}$ and $\mathcal V\subset \mathcal Y=\mathbb R^{n_f}$ are real open subsets. 
\begin{theorem}\thmlab{thm1}
  Fix $0\le \underline{\chi}<\chi_0$ and $0\le \underline{\nu}<\nu_0$. Then there exists an $\epsilon_0>0$ so that for all $\epsilon\le \epsilon_0$ \textnormal{the SOF method} constructs a transformation $(x,y)\mapsto (x_0,y_0)$ which is $\epsilon$-close to the identity from $(\mathcal U+i\underline{\chi})\times (\mathcal V+i\underline{\nu})$ to $(\mathcal U+i{\chi}_0)\times (\mathcal V+i{\nu}_0)$ mapping \eqref{xys12} into
\begin{eqnarray}
 \dot x&=&\epsilon (\Lambda(x)+\mu(x)y + Q(x)y^2 +C(x,y)),\eqlab{xys13} 
\\
\dot y &=&\rho(x)+A(x)y+R(x,y),  \nonumber
\end{eqnarray}
with $\mu$ and $\rho$ vanishing at equilibria $(x_e,y_e)$ where
\begin{eqnarray}
\Lambda(x_e)=0,\quad y_e=0,\eqlab{eqset}
\end{eqnarray}
and both $\mu$ and $\rho$ are exponentially small
\begin{eqnarray*}
 \gamma= \Vert \mu \Vert_{\underline{\chi}},\,\delta = \Vert \rho \Vert_{\underline{\chi}} = \mathcal O(e^{-c_1/\epsilon}),
\end{eqnarray*}
$T(x,y)=Q(x)y^2 +C(x,y)$, $C=\mathcal O(y^3)$, and
 \begin{eqnarray*}
 \Vert \Lambda-\Lambda_0\Vert_{\underline{\chi}},\,\Vert A-A_0\Vert_{\underline{\chi}},\,\Vert T-T_0\Vert_{\underline{\chi},\underline{\nu}},\,\Vert R-R_0\Vert_{\underline{\chi},\underline{\nu}} &\le c_2 \epsilon,
\end{eqnarray*}
for some constants $c_1$ and $c_2$.
\end{theorem}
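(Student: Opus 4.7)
The strategy is to prove \thmref{thm1} in two stages corresponding to the two sequences of transformations $y_{i+1}\mapsto y_i$ and $x^{i+1}\mapsto x^i$ described in the introduction. Stage one is the SO method of \secref{somethod}, applied $N_1 = \lfloor c_0/\epsilon\rfloor$ times. Since these transformations act only on $y$, the required exponential bound $\rho = \mathcal O(e^{-c_1/\epsilon})$ is exactly the content of the main theorem of \cite{kri3} and may be invoked as a black box. After relabeling we may therefore assume that the starting system \eqref{xys12} already has $\rho_0$ exponentially small, while $\mu_0 = \mathcal O(1)$.

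Stage two is the new iteration. For $i=0,\ldots,N_2-1$, introduce $x^i = x^{i+1}+\epsilon\phi_i(x^{i+1})y$, where $\phi_i$ solves the Sylvester-type equation
\begin{eqnarray*}
 \phi_i A_0 - \epsilon\,\partial_x \Lambda_0\, \phi_i = \mu_i.
\end{eqnarray*}
The fastness condition \eqref{M0fast} makes $A_0^{-1}$ analytic and bounded on the complex domain, so for $\epsilon$ small enough this equation is uniquely solvable by a Neumann series and $\Vert\phi_i\Vert \le 2\Vert \mu_i A_0^{-1}\Vert$. Because each transformation is the identity on $\{y=0\}$, the coefficients $\Lambda_0$, $A_0$, and the exponential remainder $\rho$ are unchanged throughout the entire stage; only $\mu$, $T$, and $R$ evolve. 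As in the computation leading to \eqref{mu1}, the new coefficient of $y$ in $\epsilon^{-1}\dot x^{i+1}$ is exactly $\mu_{i+1} = -\epsilon\,\partial_x \phi_i\,\Lambda_0$, which therefore vanishes at every equilibrium \eqref{eqset} because $\Lambda_0$ does; similarly the exponentially small $\rho$ from stage one continues to vanish there.

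The core estimate is a Neishtadt-style bookkeeping driven by the Cauchy inequality. Shrinking the $x$-strip by a uniform $\xi = (\chi_0-\underline{\chi})/N_2$ per step, \lemmaref{cest} yields
\begin{eqnarray*}
 \Vert\mu_{i+1}\Vert_{\chi_0-(i+1)\xi} \le \frac{\epsilon C}{\xi}\,\Vert\mu_i\Vert_{\chi_0-i\xi},
\end{eqnarray*}
with $C$ depending only on $\Vert A_0^{-1}\Vert$, $\Vert \Lambda_0\Vert$, $\Vert\partial_x\Lambda_0\Vert$ and the initial sup-norms. Iterating and then choosing $N_2 = \lfloor\alpha(\chi_0-\underline{\chi})/(C\epsilon)\rfloor$ for some fixed $\alpha\in(0,1)$ gives
\begin{eqnarray*}
 \Vert\mu_{N_2}\Vert_{\underline{\chi}} \le \left(\frac{\epsilon C N_2}{\chi_0-\underline{\chi}}\right)^{N_2}\Vert\mu_0\Vert_{\chi_0} \le \alpha^{N_2}\Vert\mu_0\Vert_{\chi_0} = \mathcal O(e^{-c_2/\epsilon}).
\end{eqnarray*}
A parallel shrinkage in the $y$-direction absorbs the evolution of the Taylor remainders $T_i$ and $R_i$ and confirms the splitting $T = Qy^2 + C$ with $C = \mathcal O(y^3)$.

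The main obstacle is uniformity of $C$ across the $N_2 = \mathcal O(\epsilon^{-1})$ iterations: \emph{a priori} nothing prevents $\Vert T_i\Vert$ and $\Vert R_i\Vert$ from drifting as we iterate, which would spoil the geometric decay. The resolution rests on the observation above that $\Lambda_0$ and $A_0$ are fixed under every step, while each transformation is $\epsilon$-close to the identity and therefore perturbs $T_i$ and $R_i$ by $\mathcal O(\epsilon)$ per step on the corresponding complex domains; summing a geometric series over $N_2$ steps keeps these perturbations bounded on $(\mathcal U+i\underline{\chi})\times(\mathcal V+i\underline{\nu})$, and a routine induction then furnishes uniform sup-norm bounds. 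The $\mathcal O(\epsilon)$-closeness statements for $\Lambda-\Lambda_0$, $A-A_0$, $T-T_0$, and $R-R_0$ in the theorem follow from the same observation, together with the fact that the composition of all transformations in both stages is globally $\mathcal O(\epsilon)$-close to the identity and fixes $\{y=0\}$.
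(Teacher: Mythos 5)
Your proposal is correct and follows essentially the same route as the paper: first invoke the SO method/\cite{kri3} to get the exponentially small $\rho$, then iterate transformations $x_{i}=x_{i+1}+\epsilon\phi_i(x_{i+1})y$ with $\phi_i$ solving the same linear equation \eqref{phieqni} (solvable since $A^{-1}$ is bounded and $\epsilon\partial_x\Lambda$ is small), exploit that $\Lambda$ and $A$ are unchanged by these steps so the Cauchy-estimate contraction constant is uniform, and take $\mathcal O(\epsilon^{-1})$ steps on successively shrunk complex strips to reach $\mu=\mathcal O(e^{-c/\epsilon})$, with the $\epsilon$-closeness claims following from the geometric decay of the step sizes. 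The only (harmless) bookkeeping difference is that the paper sets $\rho\equiv 0$ during the iteration and reinstates it at the end via the single composed map $x\mapsto x+\epsilon\phi(x)y$, whose extra linear-in-$y$ contribution $\epsilon\,\partial_x\phi\,(\phi\rho)$ is exponentially small, whereas you carry $\rho$ along, which only perturbs $\Lambda$ and $A$ by exponentially small amounts.
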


We highlight that the estimates are not uniform in $\underline{\chi}$ and $\underline{\nu}$. We also have the following corollary which provides a convenient form for the transformation in \thmref{thm1}:
\begin{corollary}\corlab{cornum}
If the eigenvalues of $A_0$ all have non-zero real part and {the dependency of $\epsilon$ is smooth} then there exists an $\epsilon_0>0$ so that for all $\epsilon\le \epsilon_0$ there exist a slow manifold $M$ of \eqref{xys1} and $N_1,\,N_2=\mathcal O(\epsilon^{-1})\in \mathbb N$ so that $M$ is given as the graph
\begin{eqnarray}
 y_0 = \eta(x_0)+\mathcal O(e^{-c_1/\epsilon}),\eqlab{y0sm}
\end{eqnarray}
with 
\begin{eqnarray}
\eta =\sum_{n=1}^{N_1} \eta_n=\mathcal O(\epsilon),\eqlab{eta}
\end{eqnarray}
where the partial sums $\eta^n
\equiv \sum_{i=1}^n \eta_i$ satisfy \eqref{fraser}, repeated here for convenience,
\begin{eqnarray}
-\epsilon \partial_x \eta^{n-1} X_{0}(x_0,\eta^n) +
Y_0(x_0,\eta^n)=0,\eqlab{fraser2}
\end{eqnarray}
for $1\le n\le N_1$ using the convention $\eta^{0}\equiv 0$.
Furthermore, the tangent space of the fibers $\mathcal F_\epsilon^{z_0}$ at
the base point $z_0=(x_0,y_0)$, with $y_0$ as in \eqref{y0sm}, is given as
\begin{eqnarray}
 T_{z_0} \mathcal F^{z_0}_\epsilon = \textnormal{Rg}\,\left(
\begin{pmatrix}
\epsilon {\phi}(x_0)\\
I_f+\epsilon \partial_x {\eta}(x_0){\phi}(x_0)
\end{pmatrix}+\mathcal O(e^{-c_2/\epsilon})\right).\eqlab{tsp}
\end{eqnarray}
Here $I_f=\text{identity}\in \mathbb R^{n_f\times
n_f}$ and ${\phi}=\sum_{n=0}^{N_2}\phi_n$ where 
the partial sums 
\begin{eqnarray*}
\phi^n\equiv\sum_{i=0}^n\phi_i,
\end{eqnarray*}
satisfy
\begin{eqnarray}
  \epsilon (\partial_x X_0+\partial_y X_0 \partial_{x} {\eta})\phi^{n} -\epsilon \partial_x \phi^{n-1} X_0+\partial_y X_0 - \phi^{n} (-\epsilon \partial_{x} {\eta} \partial_y X_0+\partial_y Y_0) = 0,\eqlab{phii}
\end{eqnarray}
for $0\le n\le N_2$ using the convention $\phi^{-1}\equiv 0$. The functions $X_0,\,\partial_x X_0,\,\partial_y X_0$ and $\partial_y Y_0$ in \eqref{phii} are all evaluated at $(x_0,{\eta}(x_0))$, 
\end{corollary}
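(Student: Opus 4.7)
My plan has three main components. First, for the slow manifold part (equations \eqref{eq:y0sm}, \eqref{eq:eta}, \eqref{eq:fraser2}): this follows by combining the excerpt's derivation of \eqref{eq:fraser} from the individual SO iterations with Theorem~\ref{theorem:thm1}. The excerpt already shows that if $\eta_n$ is defined by $Y_{n-1}(x_0,\eta_n)=0$ at each step, then $\eta^n=\sum_{i=1}^n \eta_i$ satisfies $-\epsilon\partial_x\eta^{n-1}X_0(x_0,\eta^n)+Y_0(x_0,\eta^n)=0$, which is precisely \eqref{eq:fraser2}. With $N_1=\mathcal{O}(\epsilon^{-1})$, the estimate $\|\rho_{N_1}\|\le c\,\epsilon\,\|\rho_{N_1-1}\|_{C^1}$ together with Cauchy estimates on shrinking complex strips (as in \cite{kri3}) yields $\|\rho_{N_1}\|=\mathcal{O}(e^{-c_1/\epsilon})$. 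The hypothesis that all eigenvalues of $A_0$ have non-zero real part, together with smoothness in $\epsilon$, activates Fenichel's theorem; its true slow manifold $M$ is exponentially close to the graph $y_0=\eta(x_0)$ by the usual uniqueness-up-to-exponentially-small-error argument.

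For the fiber part, I would first identify the coefficient functions $\Lambda,\mu,A$ that Theorem~\ref{theorem:thm1} produces after the SO step in terms of the original data. Straightening out $\eta$ through $y_0=\tilde y+\eta(x_0)$ and reading off linear pieces gives $\Lambda(x_0)=X_0(x_0,\eta(x_0))$, $\mu(x_0)=\partial_y X_0(x_0,\eta(x_0))$, and $A(x_0)=\partial_y Y_0(x_0,\eta(x_0))-\epsilon\partial_x\eta(x_0)\partial_y X_0(x_0,\eta(x_0))$. Then the single-step equation $\epsilon\partial_x\Lambda\,\phi_0+\mu-\phi_0 A=0$ from Section~\ref{sec:sofsec}, after substituting these expressions and using $\partial_x(X_0(x_0,\eta))=\partial_x X_0+\partial_y X_0\partial_x\eta$, becomes exactly \eqref{eq:phii} with $n=0$ and $\phi^{-1}\equiv 0$.

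The main obstacle, and the heart of the derivation, is showing that the $n$-th partial sum $\phi^n=\sum_{i=0}^n\phi_i$ produced by iterating the SOF transformations $x_i=x_{i+1}+\epsilon\phi_i(x_{i+1})\tilde y$ satisfies \eqref{eq:phii}. The argument parallels the telescoping one used for $\eta^n$ in \eqref{eq:fraser}. At stage $i$, the single-step equation reads $\epsilon\partial_x\Lambda_i\phi_i+\mu_i-\phi_i A_i=0$, and the new linear coefficient is $\mu_{i+1}=-\epsilon\partial_x\phi_i\Lambda_i$ (from the Jacobian inversion, cf.\ \eqref{eq:mu1}); note that $\Lambda$ and $A$ are fixed along the iteration. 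Summing the stage-$i$ equations from $i=0$ to $n$ while substituting the telescoping expression for $\mu_{i+1}$ produces the cumulative identity $\epsilon\partial_x\Lambda\,\phi^n-\epsilon\partial_x\phi^{n-1}\Lambda+\mu_0-\phi^n A=0$, which is \eqref{eq:phii} once $\Lambda$, $\mu_0$ and $A$ are expanded in terms of $X_0,Y_0,\eta$. The solvability of each linear step uses invertibility of $A_i$, guaranteed by the hyperbolicity hypothesis together with the $\epsilon$-smallness of $\epsilon\partial_x\eta\partial_y X_0$.

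Finally, the tangent space formula \eqref{eq:tsp} is a direct Jacobian computation. In the final coordinates of Theorem~\ref{theorem:thm1} the slow manifold is $\{y=0\}$ and the fiber through any base point is tangent to $\{x=\mathrm{const}\}$ up to an exponentially small error from the $\mu y$ correction. Pulling this back through the composition of the SO transformation $y_0=y+\eta(x_0)$ with the SOF transformation $x_0=x+\epsilon\phi(x)y$ gives, by the chain rule evaluated at $y=0$, the Jacobian block $\begin{pmatrix}\epsilon\phi\\ I_f+\epsilon\partial_x\eta\,\phi\end{pmatrix}$, yielding \eqref{eq:tsp}. The $\mathcal{O}(e^{-c_2/\epsilon})$ correction tracks the exponentially small $\mu$ from Theorem~\ref{theorem:thm1} through the inverse function theorem on the tangent map.
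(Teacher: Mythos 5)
Your proposal is correct and follows essentially the same route as the paper: existence and exponential accuracy of $\eta$ via the iterated SO estimates of \cite{kri3} (Neishtadt-type), Fenichel's theorems for $M$ and its fibers, the telescoping summation of the single-step equations \eqref{phieqni} with $\Lambda$, $A$ fixed to obtain \eqref{phii} (this is precisely what the paper means by ``follows by summation over $n$''), and the chain-rule/Jacobian computation of the coordinate curves for \eqref{tsp}. The only difference is that you spell out the telescoping sum and the exponential estimate that the paper delegates to citations.
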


\begin{proof}
  Here Fenichel's theorems applies \cite[Theorem 2 and 3, pp. 8 and 20]{jon1}. {The existence of $\eta$ in \eqref{y0sm} is guaranteed by \cite[Lemma 1]{nei87}. Their $\eta$ is, however, generated by appropriate linearizations of \eqref{fraser2}. The proof can, nevertheless, be modified so that the updates are based on \eqref{fraser2}. This is done in \cite{kri3}.}
For the second part, note that each $\phi_n$ solves \eqref{phieqni} below.
Here $\Lambda(x_0) = X_0(x_0,\eta(x_0))$ and $A(x_0) = -\epsilon \partial_{x} {\eta}(x_0) \partial_y  X_0(x_0,\eta(x_0))+\partial_y Y_0(x_0,\eta(x_0))$ are given through the already determined $\eta$, and \eqref{phii} then follows by summation over $n$. Also since the method generates a transformation $(x,y)\mapsto (x_0,y_0)$ of the form 
\begin{eqnarray}
 x_0&=&x+\epsilon {\phi}(x)y + \mathcal O(y^2),\eqlab{x02x}\\
 y_0 &=& y+{\eta}(x_0).\nonumber
\end{eqnarray}
we obtain a tangent vector to the curve $\theta=\theta((y)_i)$ at $(x_0,\eta(x_0))$ as
\begin{eqnarray*}
 \theta'(0) = \begin{pmatrix}
\epsilon ({\phi})^i\\
e_i+\epsilon \partial_{x} {\eta} ({\phi})^i
\end{pmatrix}.
\end{eqnarray*}
Here $(e_i)_j=\delta_{ij}$ Kronecker's delta, and
$(\phi)^i=(\phi)^i(x_0)\in \mathbb R^{n_s}$ is the $i$th column of
$\phi=\phi(x)\in \mathbb R^{n_s\times n_f}$. 
\end{proof}
 {\begin{remark}
  Note that we have assumed in \corref{cornum} that the dependency on $\epsilon$ is smooth. We need this to be able to invoke Fenichel's theory. However, \thmref{thm1} still applies for non-smooth $\epsilon$-dependency and non-hyperbolic slow manifolds $M_0$ satisfying \eqref{M0fast}. In this case the corollary just presents a convenient form \eqref{phii} of the SOF method only involving the vector-field and its first partial derivatives. Moreover, the tangent spaces should in this case be interpreted not by Fenichel's theory but by the fact that they lead to \eqref{xys13}. 
 \end{remark}}
 {\begin{remark}
      As highlighted in \eqref{tsp} the $n_f$ columns of the matrix
\begin{eqnarray}
 \begin{pmatrix}
  \epsilon \phi(x_0)\\
  I_f + \epsilon \partial_x \eta(x_0) \phi(x_0),
 \end{pmatrix}\in \mathbb R^{n\times n_f},\eqlab{matrix1}
\end{eqnarray}
span the tangent space to the fibers based at $(x_0,\eta(x_0))$. Consequently, the $n_s$ rows of 
\begin{eqnarray}
 \begin{pmatrix}
  I_s+\epsilon \phi(x_0)\partial_x \eta(x_0)\,\,-\epsilon \phi(x_0)
 \end{pmatrix}\in \mathbb R^{n_s\times n},\eqlab{matrix2}
\end{eqnarray}
span the normal space to the fibers. Indeed, \eqref{matrix2} has rank $n_s$, which is the dimension of the normal space, and if we multiply \eqref{matrix2} on the left of \eqref{matrix1} we obtain
\begin{eqnarray*}
 \begin{pmatrix}
  I_s+\epsilon \phi(x_0)\partial_x \eta(x_0)\,\,-\epsilon \phi(x_0)
 \end{pmatrix} \begin{pmatrix}
  \epsilon \phi(x_0)\\
  I_f + \epsilon \partial_x \eta(x_0) \phi(x_0),
 \end{pmatrix} 
 &=& 0\in \R^{n_s\times n_f}.
\end{eqnarray*}
     \end{remark}}

We believe that these results, in particular in the form (\eqsref{fraser2}{phii}) presented in Corollary 1, are useful in computations as the approximation of the relevant objects, the slow manifold and its tangent spaces, only require evaluations of the initial vector-field and its gradients. In particular, we believe that the approximations of the tangent spaces can be usefully applied in examples with many fast degrees of freedoms where one is faced with having to trade off accuracy with minimizing computational effort. {From a given point $(x_0,y_0)$}, near the slow manifold $M=\{y_0=\eta(x_0)\}$ \eqref{eta}, one can approximate the fiber projection $\pi_f:(x_0,y_0)\mapsto (x_b,\eta(x_b))$, onto the base point, by solving the equations 
\begin{eqnarray}
x_0 &=&x_b^{\text{app}}+\epsilon {\phi}(x_b^{\text{app}})y,\eqlab{xbapp}\\
y_0 &=& y+\eta(x_0),\nonumber
\end{eqnarray}
for $y$ and $x_b^{\text{app}}$.  The second equation gives $y=y_0-\eta(x_0)$
which inserted into the first equation gives a non-linear equation for
$x_b^{\text{app}}$. The right hand side of this equation is, however,
$\epsilon$-close to the identity. {Eq. \eqref{xbapp} is similar to Eq. (3.9) in \cite{rob89} ($x^{\mathcal M}$ in \cite{rob89} playing the role of $x_b^{\text{app}}$, $h$ the role of $\eta$ and $P$ the role of $\epsilon \phi$).} 

In \cite{kap4} it is stated that this
projection is only $\mathcal O(\epsilon)$, and therefore asymptotically in
$\epsilon$ not better than the ``naive projection'' $(x_0,y+\eta(x_0))
\mapsto (x_0,\eta(x_0))$. However, this estimate is for fixed $y$. We
believe it is more appropriate to highlight that the error is of the form:
\begin{eqnarray*}
 \Vert \pi_f(x_0,y_0)-(x_b^{\text{app}},\eta(x_b^{\text{app}}))\Vert =\mathcal O(\epsilon y^2),
\end{eqnarray*}
ignoring here the exponentially small error terms. 
 It is exact up the exponentially small error terms if
 the tangent space is a hyperplane (which \cite{kap4} also highlights). The
different projections are illustrated in \figref{proj}. The linear
projection accounts for the ``initial slip'' \cite{ajr2} along the slow
manifold.

\begin{figure}[h!]
\begin{center}
{\includegraphics[width=.75\textwidth]{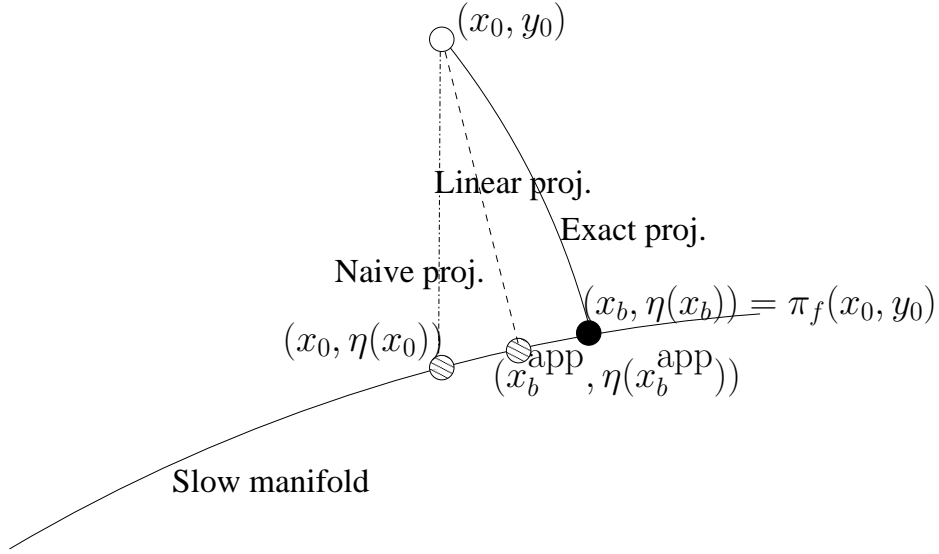}}
\end{center}
\caption{Illustration of the different projections: naive, linear and exact. The linear projection $(x_0,y_0)\mapsto (x_b^{\text{app}},\eta(x_b^{\text{app}}))$ is given by the equations in \eqref{xbapp}.  }
\figlab{proj}
\end{figure}

By approximating the fiber projection we can compute approximations to the dynamics having only to propagate initial conditions on $\mathcal O(1)$ time scales, splitting the problem into first propagating the base point $x_b=x_b(\tau)$, $\tau=\epsilon t$, through
\begin{eqnarray*}
x_b' &=& X_0(x_b,\eta(x_b)),\\
()'&=&\frac{d}{d\tau},
\end{eqnarray*}
and then follow this by propagating $y_0=y_0(t)$ through
\begin{eqnarray*}
 \dot y_0 &=&Y_0(x_0,y_0+\eta(x_0)),
\end{eqnarray*}
using $x_0=x_{b}+\epsilon \phi(x_b)y$ and the solution $x_b=x_b(\tau)$ obtained from the first step. This is the subject of our next paper \cite{kri4}.


\section{Proof of \thmref{thm1}}\seclab{prvthm1}
We first make use of the SO method and the result from \cite{kri3} to determine $\eta$ and transform via $y_0=y+\eta(x_0)$ \eqref{xys12} into
\begin{eqnarray}
 \dot x_0 &=&\epsilon X(x_0,y)=\epsilon (\Lambda(x_0)+\mu_0(x_0)y+T_0(x_0,y)),\eqlab{xys22}\\
 \dot y &=&Y(x_0,y)=\rho(x_0)+A(x_0)y+R(x_0,y),\nonumber
\end{eqnarray}
defined on the domain $(x_0,y)\in (\mathcal U+i\chi)\times (\mathcal V+i\nu)$ with $\chi = (\underline{\chi}+\chi_0)/2$ and $\nu =\underline{\nu}$, and where $\rho=\mathcal O(e^{-C_1/\epsilon})$. We initially ignore this term setting $\rho\equiv 0$. Furthermore, $X$ and $Y$ are $\epsilon$-close to $X_0$ respectively $Y_0$ being given by
\begin{eqnarray}
 X(x_0,y) &=& X_0(x_0,\eta(x_0)+y),\eqlab{XY}\\ 
Y(x_0,y) &=& -\epsilon \partial_x \eta X_0(x_0,\eta(x_0)+y) + Y_0(x_0,\eta(x_0)+y),\nonumber
\end{eqnarray}
so that also
\begin{eqnarray}
 \Lambda(x_0) &=& X(x_0,0),\quad \mu_0(x_0) = \partial_y X(x_0,0),\quad  A(x_0) =\partial_y Y(x_0,0).\eqlab{A}
\end{eqnarray}
The functions $R$ and $T_0$ are the quadratic remainders from the Taylor expansion of $Y$ respectively $X$ about $y=0$.
 Let $K$, $C_\Lambda$ and $C_\Lambda^\prime$ be so that $\Vert A^{-1}\Vert_{\chi} \le \frac{K}{2}$, $\Vert \Lambda\Vert_{\chi}\le C_\Lambda$ and $\Vert \partial_x \Lambda \Vert_{\chi}\le C_\Lambda^\prime$. 
 
 We define the error $\gamma_0$ by
\begin{eqnarray*}
 \gamma_0 = \Vert \mu_0\Vert_{\chi},
\end{eqnarray*}
and apply the transformation
\begin{eqnarray*}
x_0 = x_1+\epsilon \phi_0 (x_1) y,
\end{eqnarray*}
where $\phi_0$ solves 
\begin{eqnarray}
\epsilon \partial_x \Lambda \phi_0+\mu_0 -\phi_0 A = 0.\eqlab{phieqn}
\end{eqnarray}
Cf. \eqref{xp} this transforms the system into
\begin{eqnarray*}
 \dot x_1 &=& \epsilon (\Lambda(x_1)+\mu_1(x_1) y+T_1(x_1,y))\\
\dot y &=&A(x_1)y+R_1(x_1,y),
\end{eqnarray*}
with
\begin{eqnarray*}
 \mu_1 = -\epsilon \partial_{x} \phi_0 \Lambda.
\end{eqnarray*}
\begin{remark}\remlab{mu1prop}
The new error function $\mu_1$ vanishes at an equilibrium of \eqref{xys22} where
$\Lambda = 0$. This implies that the linearization in these coordinates
takes a very suitable form with the linearized slow dynamics
$\frac{d\delta x_1}{dt} = \epsilon \partial_x \Lambda \delta x_1$ exactly
independent of the fast variables. This property is preserved during the iteration.
\end{remark}
Note that
\begin{eqnarray}
 \Vert x_1-x_0\Vert_{\chi,\nu}= \epsilon \Vert \phi_0 y\Vert_{\chi,\nu} \le
\epsilon \gamma_0 \sigma,\eqlab{dx}
\end{eqnarray}
where $\sigma = \sup_{y\in \mathcal V+i\underline{\nu}} \Vert y\Vert<\infty$. From the linear equation \eqref{phieqn} we immediately obtain the following Lemma.
\begin{lemma}
If $\epsilon \le 1/(K C_\Lambda^\prime)$ then the solution of \eqref{phieqn} satisfies
\begin{eqnarray}
 \Vert \phi_0 \Vert_{\chi}\le K\gamma_0.\eqlab{phi0}
\end{eqnarray}
\end{lemma}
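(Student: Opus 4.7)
The plan is to exploit the invertibility of $A$ and recast \eqref{phieqn} as a fixed-point equation for $\phi_0$ on the Banach space of bounded analytic functions $\mathcal{U}+i\chi \to \mathbb{R}^{n_s\times n_f}_{\mathbb C}$ equipped with the sup-norm $\|\cdot\|_\chi$. Multiplying \eqref{phieqn} on the right by $A^{-1}$, which exists pointwise by the fastness assumption and is bounded by $\|A^{-1}\|_\chi \le K/2$, gives the equivalent form
\begin{eqnarray*}
 \phi_0 = \mathcal{T}(\phi_0) := \mu_0 A^{-1} + \epsilon\,(\partial_x\Lambda)\,\phi_0\, A^{-1}.
\end{eqnarray*}

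The main step is to verify that $\mathcal{T}$ is a contraction under the smallness assumption. The map is affine and its linear part $\phi\mapsto \epsilon(\partial_x\Lambda)\phi A^{-1}$ has operator norm bounded by
\begin{eqnarray*}
\epsilon \|\partial_x\Lambda\|_\chi \|A^{-1}\|_\chi \le \epsilon\, C_\Lambda^\prime\, (K/2) \le 1/2,
\end{eqnarray*}
where the last inequality uses precisely the hypothesis $\epsilon \le 1/(K C_\Lambda^\prime)$. Hence $\mathcal{T}$ is a contraction with Lipschitz constant $\le 1/2$, and the Banach fixed-point theorem produces a unique solution $\phi_0$ of \eqref{phieqn} in the Banach space above. (Analyticity is preserved under this fixed-point construction since the iterates remain analytic and converge uniformly on $\mathcal{U}+i\chi$.)

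Finally, to extract the quantitative bound \eqref{phi0}, I would evaluate norms directly at the fixed point:
\begin{eqnarray*}
\|\phi_0\|_\chi \le \|\mu_0\|_\chi \|A^{-1}\|_\chi + \epsilon\, \|\partial_x\Lambda\|_\chi \|\phi_0\|_\chi \|A^{-1}\|_\chi \le \gamma_0(K/2) + (1/2)\|\phi_0\|_\chi,
\end{eqnarray*}
which rearranges to $\|\phi_0\|_\chi \le K\gamma_0$. There is no real obstacle in this argument — the only subtlety is that the matrix equation \eqref{phieqn} is a Sylvester-type equation in general, but because the term $\epsilon\partial_x\Lambda \phi_0$ is small we can sidestep invoking $\sigma(A)\cap \sigma(-\epsilon \partial_x\Lambda) = \emptyset$ and instead treat the equation perturbatively around the leading-order solution $\phi_0 \approx \mu_0 A^{-1}$.
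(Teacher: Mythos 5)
Your proposal is correct and follows essentially the same route as the paper: both recast \eqref{phieqn} as a fixed-point problem for the affine map $\phi_0\mapsto \mu_0 A^{-1}+\epsilon\,\partial_x\Lambda\,\phi_0 A^{-1}$, use $\Vert A^{-1}\Vert_\chi\le K/2$ together with $\epsilon\le 1/(KC_\Lambda^\prime)$ to get a contraction constant $\le 1/2$, and read off $\Vert\phi_0\Vert_\chi\le K\gamma_0$. The only cosmetic difference is that the paper contracts in the shifted variable $z=\phi_0-\mu_0A^{-1}$ on the ball of radius $K\gamma_0/2$, whereas you contract on the whole space and rearrange the norm inequality at the fixed point; the conclusion and the use of the hypothesis are identical.
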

\begin{proof}
 Take $\phi_0^0 = \mu A^{-1}$, let $r= \frac{K}{2}\gamma$ and introduce
$\phi_0 = \phi_0^0+z$ so that \eqref{phieqn} becomes
\begin{eqnarray*}
 z = F(z),
\end{eqnarray*}
where $F(z) = \epsilon \partial_x \Lambda (\phi_0^0 +z) A^{-1}$. We have
\begin{eqnarray*}
 \Vert F(z)\Vert_{\chi} \le { \epsilon K C_\Lambda^\prime} r \le r,\\
\Vert \partial_{z} F\Vert_{\chi} \le \frac{\epsilon K
C_\Lambda^\prime}{2}<1.
\end{eqnarray*}
Here we have used the assumption $\epsilon\le 1/(K C_\Lambda^\prime)$. The function $F$ is therefore a contraction on $B_r\subset \mathcal V+i\nu$ and there exists a unique solution of \eqref{phieqn} with
\begin{eqnarray*}
 \Vert \phi_0\Vert_{\chi}\le 2r = K\gamma_0.
\end{eqnarray*}
\end{proof}

The solution is also analytic in $x_0$. Using this lemma we can then estimate the new error using a Cauchy
estimate
\begin{eqnarray*}
 \gamma_1\equiv \Vert \mu_1 \Vert_{\chi_1} \le \epsilon
\frac{KC_{\Lambda}}{\xi_0} \gamma_0,
\end{eqnarray*}
where $\chi_1 = \chi-\xi_0$. 

We now use this
result successively, introducing $x_n = x_{n+1}+\epsilon \phi_n(x_{n+1})y$
with $\phi_n$ solving
\begin{eqnarray}
\epsilon \partial_x \Lambda\phi_{n} +\mu_{n} - \phi_{n} A = 0,\quad 
\mu_n = -\epsilon \partial_{x} \phi_{n-1} \Lambda,\eqlab{phieqni} 
\end{eqnarray}
on $x\in \mathcal U+i\chi_n$, $\chi_n=\chi-\sum_{i=0}^{n-1} \xi_n$, for
each $n\ge 1$. We take $\xi_n=\overline{\xi}=2\epsilon KC_{\Lambda}$ at each step so
that
\begin{eqnarray}
 \gamma_{n+1} \le \epsilon \frac{KC_{\Lambda}}{\xi_n}\gamma_{n}\le
\frac{1}{2}\gamma_{n}\le 2^{-(n+1)} \gamma_0,\eqlab{gamman}
\end{eqnarray}
 with $\gamma_{n} = \Vert \mu_n\Vert_{\chi_{n}}$, $\chi_{n} = \chi-n\overline{\xi}$.  Note also that
\begin{eqnarray*}
 \Vert x_{n}-x_0\Vert_{\chi_n,\nu} &\le \sum_{i=0}^{n-1} \Vert x_{i+1} - x_{i}\Vert_{\chi_n,\nu}\le \epsilon \sigma \sum_{i=0}^{n-1} 2^{-i} \gamma_0\le 2\epsilon \sigma \gamma_0.
\end{eqnarray*}
Setting $\chi_{N_2} = \underline \chi$ we realize that we can take
$N_2=\frac{\chi-\underline{\chi}}{\overline{\xi}}=\mathcal O(\epsilon^{-1})$ steps
so that
\begin{eqnarray*}
 \gamma_{N_2} \le
2^{-\left(\frac{\chi_0-\underline{\chi}}{4\epsilon K
C_{\Lambda}}\right)} \gamma_0.
\end{eqnarray*}
{Now, let $\phi = \sum_{n=1}^{N_2} \phi_n$ and notice that the difference between $x_{N_2}\mapsto x_0$ and 
\begin{eqnarray}
x\mapsto x_0=x+\epsilon \phi(x) y,\eqlab{x02x2}
\end{eqnarray}
is $\mathcal O(y^2)$, see also \eqref{x02x}. Therefore applying \eqref{x02x2} to \eqref{xys22} gives
\begin{eqnarray*}
 \dot x &=&\epsilon \left(\Lambda(x) +\left(\mu_{N_2} +\epsilon \partial_x \phi (\phi(x) \rho) \right)y +\mathcal O(y^2)\right).
\end{eqnarray*}
The term in $\epsilon^{-1}\dot x$ which is linear in $y$ is exponentially small, and the result therefore follows. }

\begin{remark}\remlab{sofn}
Here we consider a fixed number applications of the SOF method, and show
that the extension should only be iterated as many times as the first part
has been iterated for the approximation of the slow manifold. To show this,
we fix $k\in \mathbb N_0$, and apply the SO method $k$ times to
\eqref{xys1} so that the equations for $(x_0,y_k)=(x_0,y_0-\eta^k(x_0))$
are
\begin{eqnarray*}
 \dot x_0 &=&\epsilon X_0(x_0,y_k+\eta^k),\\
\dot y_k &=&\rho_k(x_0) + A_k(x_0)y_k + R_k(x_0,y_k),
\end{eqnarray*}
with $\rho_k=\mathcal O(\epsilon^{k+1})$. (The SO method has then
been applied $k+1$ times to the original equations \eqref{xys0}.) Next, we
apply the SOF method to these equations by introducing the transformation
$x_0=x_{n+1}+\epsilon \phi^{n}(x_{n+1})y_k$ where $\phi^{n}=\mathcal O(1)$
solves \eqref{phii} with $\eta$ replaced by $\eta^k$:
\begin{eqnarray*}
 \dot x_{n+1} &=& \epsilon X_{n+1}(x_{n+1},y_k)\equiv \epsilon \bigg(X_0-\phi^{n}\rho_n\\
&\quad&+ \bigg\{\epsilon (\partial_x X_0+\partial_y X_0 \partial_{x} \eta^k)\phi^{n} -\epsilon \partial_x \phi^{n-1} X_0+\partial_y X_0 - \phi^{n} (-\epsilon \partial_{x} {\eta}^k \partial_y X_0+\partial_y Y_0)\bigg\}y_k \\
&\quad &-\epsilon \bigg(\partial_x (\phi^{n}-\phi^{n-1}) X_0+ \phi^{n}\partial_x \rho_k \phi^n\bigg)y_k+\mathcal O(y_k^2)\bigg)\\
&=&\epsilon \bigg(X_0-\phi^{n}\rho_n+\bigg(\epsilon \partial_x \phi_{n} X_0+ \epsilon \phi^{n}\partial_x \rho_k \phi^{n}\bigg)y_k+\mathcal O(y_k^2)\bigg).
\end{eqnarray*}
Here $\phi_{n} = \phi^{n}-\phi^{n-1} = \mathcal O(\epsilon^{n})$ cf. \eqsref{phi0}{gamman}, replacing the subscripts $0$ with $n$'s in \eqref{phi0}. The functions
$X_0$, $Y_0$ and their derivatives are all evaluated at
$(x_0,\eta^k(x_0))$. Therefore the error, that is the term in $X_{n+1}$
linear in $y_k$ is formally of order 
\begin{eqnarray*}
\epsilon \partial_x \phi_{n} X_0+ \epsilon \phi^{n}\partial_x \rho_k \phi^{n} = \mathcal O(\epsilon^{n+1})+\mathcal O(\epsilon^{k+1})=\mathcal O(\epsilon^{\min({n},k)+1}),
\end{eqnarray*}
and, as expected, there is no improvement for $n$ beyond $k$. A similar result holds true when considering the transformations in \secref{curve} that seek to remove the terms in the slow vector field that are quadratic in the fast variables. 
\end{remark}

\section{Michaelis-Menten-Henri model}\seclab{appl}
In this section we demonstrate our method on the Michaelis-Menten-Henri model 
\begin{eqnarray}
 \dot x &=& \epsilon X(x,y)=\epsilon (-x+(x+\kappa-\lambda)y),\eqlab{mmh}\\
\dot y &=&Y(x,y) = x-(x+\kappa)y,\nonumber
\end{eqnarray}
for enzyme kinetics \cite{kri3}. Here $x$ and $y$ are non-negative concentrations and the parameters satisfy $\kappa>\lambda>0$ and $0<\epsilon\ll 1$. Setting $Y(x,y)=0$ gives $y=\eta_0(x)=\frac{x}{x+\kappa}$ and so $(x,y)=(x_0,y_0+\eta_0(x_0))$ transforms the system into
\begin{eqnarray}
 \dot x_0 &=& \epsilon X_0(x_0,y_0)=\epsilon \left(-\frac{\lambda x_0}{x_0+\kappa }+(x_0+\kappa-\lambda)y_0\right),\eqlab{eX0Y0mmh}\\
\dot y_0 &=&Y_0(x_0,y_0) = \frac{\kappa \lambda x_0}{(x_0+\kappa)^3}\epsilon -\left(x_0+\kappa+\frac{\kappa (x_0+\kappa-\lambda)}{(x_0+\kappa)^2}\epsilon \right)y_0,\nonumber
\end{eqnarray}
 Therefore if $\kappa\gg \epsilon$, so that $$A_0=\partial_{y} Y_0(x_0,0)\equiv x_0+\kappa+\frac{\epsilon \kappa (x_0+\kappa-\lambda)}{(x_0+\kappa)^2} \gg\epsilon,$$ $x_0$ being non-negative, then the system is slow-fast with $n_s=1$ and $n_f=1$. The variable $x_0$ is slow with $\dot x_0=\mathcal O(\epsilon)$ and $y_0$ is fast with $\vert A_0(x_0)^{-1}\vert \ll \epsilon^{-1}$ for $\epsilon \ll 1$.

\subsection{Analytic expressions of $\eta$ and $\phi$ to $2$nd order}
We now obtain analytic expressions for $\eta$ and $\phi$. {We compare our result with \cite{kap4}, where the CSP method was applied to the same model, at the end of this section}. Since the model is linear in the fast variable the SOF method only involves the solution of linear equations. First, we introduce $\eta_1$ satisfying $Y_0(x_0,\eta_1)=0$:
\begin{eqnarray*}
 \eta_1(x_0) = \frac{\kappa \lambda x_0}{(x_0+\kappa)((x_0+\kappa)^3+\epsilon \kappa(x_0+\kappa-\lambda))}\epsilon.
\end{eqnarray*}
Then we define $Y_1(x_0,y_1)=-\epsilon \partial_x \eta_0 X_0(x_0,\eta_1+y_1)+Y_0(x_0,\eta_1+y_1)$ and determine $\eta_2$ from the condition $Y_1(x_0,\eta_2)=0$. We obtain
\begin{eqnarray*}
 \eta_2 = {\frac { x_0 \left( \kappa-3\,x_0 \right) {\lambda}^{2}\kappa}{ \left( x_0+
\kappa \right) ^{7}}}{\epsilon}^{2}+\mathcal O \left( {\epsilon}^{3}
\right),
\end{eqnarray*}
and therefore
\begin{eqnarray*}
y_0=\eta^2 = \eta_1+\eta_2={\frac {\kappa\,\lambda x_0}{ \left( x_0+\kappa
 \right) ^{4}}}\epsilon-{\frac {\kappa \lambda x_0 \left( \kappa\, \left( \kappa-2\,\lambda
 \right) + \left( \kappa+3\,\lambda \right) x_0 \right) }{ \left( x_0+
\kappa \right) ^{7}}}
{\epsilon}^{2}+\mathcal O \left( {\epsilon}^{3} \right),
\end{eqnarray*}
 as a second order approximation of the slow manifold. The error-field is
\begin{eqnarray}
 \rho(x_0) = Y_2(x_0,0) = { \frac {{\lambda}^{3}\kappa x_0\left( {\kappa}^{2}-12 \kappa x_0+15\,{x_0}^{2} \right) }{
 \left( x_0+\kappa \right) ^{9}}}{\epsilon}^{3} +\mathcal O \left(
{\epsilon}^{4}
 \right).\eqlab{rmmh}
\end{eqnarray}
To approximate the fiber directions we introduce $y$ through $y_0=\eta^2+y$ and compute
\begin{eqnarray}
 \Lambda &=&X_0(x_0,\eta^2(x_0))=-{\frac {\lambda\,x_0}{\kappa+x_0}}+{\frac { \left( \kappa-\lambda+x_0
 \right) \kappa\,\lambda\,x_0}{ \left( \kappa+x_0 \right) ^{4}}}\epsilon\nonumber\\
&\quad &-{
\frac { \left( \kappa-\lambda+x_0 \right) \kappa\,\lambda\,x_0 \left( {
\kappa}^{2}-2\,\kappa\,\lambda+(\kappa+3\,\lambda)x_0 \right) }{
 \left( \kappa+x_0 \right) ^{7}}}{\epsilon}^{2}+\mathcal O \left(
{\epsilon}^{3}
 \right)\eqlab{Lmmh}\\
A &=& \partial_y Y_0(x_0,\eta^2(x_0)) = -\kappa-x_0-{\frac {\kappa\, \left( \kappa-\lambda+x_0 \right) }{ \left( 
\kappa+x_0 \right) ^{2}}}\epsilon-{\frac { \left( \kappa-3\,x_0 \right) 
 \left( \kappa-\lambda+x_0 \right) \kappa\,\lambda}{ \left( \kappa+x_0
 \right) ^{5}}}{\epsilon}^{2}\nonumber\\
 &\quad &+\mathcal O \left( {\epsilon}^{3}
\right),\eqlab{Ammh}\\
\mu_0 &=& x_0+\kappa-\lambda.\nonumber
\end{eqnarray}
Here $y$ should be $y_2$, $\rho=\rho_2$, $\Lambda=\Lambda_2$ and $A=A_2$ but we prefer to drop the subscript so that we are in the position of \eqref{xys22} and can refer to \eqref{phieqni}. 
Inserting these expressions into \eqref{phieqni} with $n=0$ gives
\begin{eqnarray*}
\phi_0 &=&\frac{\mu_0}{A-\epsilon \partial_x \Lambda}=-{\frac {x_{{0}}+\kappa-\lambda}{
x_{{0}}+\kappa}} +{\frac { \left( x_{{0}}+\kappa-\lambda
 \right) \kappa\, \left( x_{{0}}\kappa-2\lambda\right) }{
 \left( x_{{0}}+\kappa \right) ^{4}}}\epsilon 
+\mathcal O(\epsilon^2).
\end{eqnarray*}
At the next step, we first compute the new error
\begin{eqnarray*}
\mu_1 &=& -\epsilon \partial_x \phi_0  \Lambda= -{\frac {{\lambda}^{2}x_0}{ \left( \kappa+x_0 \right) ^{3}}}\epsilon+O
 \left( {\epsilon}^{2} \right),
\end{eqnarray*}
and via \eqref{phieqni} with $n=1$ we solve for $\phi_1$ 
\begin{eqnarray*}
 \phi_1 &=&\frac{\mu_1}{A-\epsilon \partial_x \Lambda }={\frac {{\lambda}^{2}x_0}{ \left( \kappa+x_0 \right) ^{4}}}\epsilon+\mathcal O(\epsilon^2).
\end{eqnarray*}
Then 
\begin{eqnarray*}
\mu_2 &=&{\frac {x_{{0}} \left( \kappa-3\,x_0 \right) {\lambda}^{3}}{ \left( \kappa +x_{{0}}\right)^6}}{\epsilon}^{2}
+\mathcal O(\epsilon^3),
\end{eqnarray*}
 so that $\phi_2$ via \eqref{phieqni} with $n=1$ becomes:
\begin{eqnarray*}
 \phi_2 &=& -{\frac {x_{{0}} \left( \kappa-3\,x_{{0}} \right) {\lambda}^{3}}{ \left( x_{{0}}+\kappa \right) ^{7}}}{
\epsilon}^{2}+\mathcal O(\epsilon^3).
\end{eqnarray*}
Let ${\phi}^2=\phi_0+\phi_1+\phi_2$:
\begin{eqnarray*}
 \phi^2 &=&-{\frac {x_{{0}}+\kappa-\lambda}{x_{{0}}+\kappa}}+{\frac { \left( {
\kappa}^{3}-3\,{\kappa}^{2}\lambda+2\,x_{{0}}{\kappa}^{2}-3\,\kappa\,
\lambda\,x_{{0}}+ x_0^{2}\kappa+2\,\kappa\,{\lambda}^{2}+{\lambda
}^{2}x_{{0}} \right) }{ \left( x_{{0}}+\kappa \right) ^{4}}}\epsilon-\left( x_{{0}}+\kappa \right) ^{-7}\\
&\quad &\times \bigg({\kappa}^{2} \left(\kappa - \lambda \right)  \left( 6\,{
\lambda}^{2}-6\,\kappa\,\lambda+{\kappa}^{2} \right) +\kappa\, \left( 
\kappa-\lambda \right)  \left( \kappa-2\lambda\right)  \left( -2
\,\lambda+3\,\kappa \right) x_{{0}}\\
&\quad &+ \left( -3\,{\lambda}^{3}-{\kappa}
^{2}\lambda+3\,{\kappa}^{3} \right)  x_0^{2}+\kappa\, \left( 
\kappa+3\,\lambda \right)  x_0^{3}
\bigg) {\epsilon}^{2}+\mathcal O(\epsilon^3),
\end{eqnarray*}
 then cf. \eqref{gamman} the span of the vector
\begin{eqnarray}
 v &=& \begin{pmatrix}
      \epsilon \phi^2\\
1+\epsilon \partial_x \eta^2 \phi^2
     \end{pmatrix}= \left( \begin {array}{c} 0\\ \noalign{\medskip}1\end {array} \right)+ 
 \left( \begin {array}{c} -{\frac { x_{{0}}+\kappa-\lambda
 }{x_{{0}}+\kappa}}\\ \noalign{\medskip}-{\frac {
 \left( x_{{0}}+\kappa-\lambda \right) \kappa}{ \left( x_{{0
}}+\kappa \right) ^{3}}}\end {array} \right)\epsilon \eqlab{vvector}\\
&\quad &+   \left( \begin {array}{c} {\frac {  \kappa\,
 \left(\kappa - \lambda \right)  \left( \kappa-2\lambda\right) +
 \left( 2\,\kappa-\lambda \right)  \left(\kappa - \lambda \right) x_{{0
}}+\kappa x_0^{2} }{ \left( x_{{0}}+\kappa \right) ^{4}}}
\\ \noalign{\medskip}{\frac { \left( \kappa\, \left( \kappa-\lambda \right)  \left( \kappa-3\,\lambda \right) + \left( 2\,{
\kappa}^{2}-\kappa\,\lambda-2\,{\lambda}^{2} \right) x_{{0}}+ \left( 
\kappa+3\,\lambda \right)  x_0^{2} \right) \kappa}{ \left( x_{{0}
}+\kappa \right) ^{6}}}\end {array}
\right){\epsilon}^{2}+\mathcal O(\epsilon^3)\nonumber
\end{eqnarray}
gives a third order approximation of the tangent space. We have left out the complicated $\mathcal O(\epsilon^3)$-terms. The transformation $(x,y)\mapsto (x_0,y_0)=(x+\epsilon \phi^2 y,y+\eta^2(x+\epsilon \phi^2 y))$ therefore transforms the system \eqref{eX0Y0mmh} into:
\begin{eqnarray}
 \dot x &=& \epsilon\left(\Lambda +\mu_3 y + \left(-{\frac { \left( x+\kappa-\lambda \right) \lambda}{
 \left( x+\kappa \right) ^{2}}}{\epsilon}+\mathcal O(\epsilon^2)
 \right) y^2+\mathcal O(\epsilon^2 y^3)\right),\eqlab{xysmmh}\\
\dot y&=&\rho(x) + \left(A(x)+\mathcal O(\epsilon^4)\right)y+\mathcal O(\epsilon y^2),\nonumber
\end{eqnarray}
 with $\rho$ as in \eqref{rmmh} and
\begin{eqnarray*}
 \mu_3 = -{\frac {x{\lambda}^{4} \left( {\kappa}^{2}-12\kappa x+15\,{x}^{2}
 \right)}{ \left( x+\kappa \right) ^{9}}}\epsilon^3
+\mathcal O(\epsilon^4),
\end{eqnarray*}
and where $\rho$, $\Lambda$ and $A$ are given in \eqref{rmmh}, \eqref{Lmmh} respectively \eqref{Ammh}.

{\subsection{Comparison with the results of the CSP method} \seclab{comp_mmh}
The reference \cite{kap4} applies the CSP method to \eqref{mmh}. Eqs. (5.56) and (5.57) in this document constitute the two components of a vector denoted by $A_1^{(2)}$ that is tangent to the fast fibers up to and including second order terms. This vector should therefore (up to a scaling) coincide with our $v$ \eqref{vvector}, omitting the $\mathcal O(\epsilon^3)$-remainder. The $\mathcal O(1)$-terms of $A_1^{(2)}$ and our $v$ coincide as $(0,1)$. Next, the $\mathcal O(\epsilon)$-term of $A_1^{(2)}$ is
\begin{eqnarray*}
 \begin{pmatrix}
  -\frac{s+\kappa-\lambda }{s+\kappa}\\
  -\frac{\kappa(s+\kappa-\lambda)}{(s+\kappa)^3}
 \end{pmatrix}
\end{eqnarray*}
while in \eqref{vvector} the corresponding term is
\begin{eqnarray*}
\left( \begin {array}{c} -{\frac { x_{{0}}+\kappa-\lambda
 }{x_{{0}}+\kappa}}\\ \noalign{\medskip}-{\frac {
 \left( x_{{0}}+\kappa-\lambda \right) \kappa}{ \left( x_{{0
}}+\kappa \right) ^{3}}}\end {array} \right)
\end{eqnarray*}
Realizing that their $s$ is our $x_0$ we see that these terms also coincide. Finally, for the $\mathcal O(\epsilon^2)$-terms we see that
\begin{eqnarray}
 \begin{pmatrix}
\frac{\kappa ( s+\kappa-2\lambda)(s+\kappa-\lambda)+\lambda^2 s}{(s+\kappa)^4}  \\
\frac{(s+\kappa-\lambda) (\kappa^2(s+\kappa-2\lambda)+\kappa\lambda s) +\kappa \lambda^2 s}{(s+\kappa)^6}
 \end{pmatrix}\eqlab{Aeps2}
\end{eqnarray}
in $A_1^{(2)}$ while in $v$ we have
\begin{eqnarray}
\left( \begin {array}{c} {\frac { \kappa\,
 \left(\kappa - \lambda \right)  \left( \kappa-2\lambda\right) +
 \left( 2\,\kappa-\lambda \right)  \left(\kappa - \lambda \right) x_{{0
}}+\kappa x_0^{2}  }{ \left( x_{{0}}+\kappa \right) ^{4}}}
\\ \noalign{\medskip}{\frac { \left( \kappa\, \left( \kappa-\lambda \right)  \left( \kappa-3\,\lambda \right) + \left( 2\,{
\kappa}^{2}-\kappa\,\lambda-2\,{\lambda}^{2} \right) x_{{0}}+ \left( 
\kappa+3\,\lambda \right)  x_0^{2} \right) \kappa}{ \left( x_{{0}
}+\kappa \right) ^{6}}}\end {array}
\right).\eqlab{veps2}
\end{eqnarray}
Clearly the denominators match, given that $s=x_0$. We therefore collect powers with respect to $s$ in the numerators in the components of \eqref{Aeps2}:
\begin{eqnarray*}
 \kappa ( s+\kappa-2\lambda)(s+\kappa-\lambda)+\lambda^2 s 
 &=&\kappa(\kappa-\lambda)(\kappa-2\lambda)+(2\kappa -\lambda)(\kappa-\lambda)s +\kappa s^2\\
 (s+\kappa-\lambda) (\kappa^2(s+\kappa-2\lambda)+\kappa\lambda s) +\kappa \lambda^2 s &=&\bigg( \kappa\, \left( \kappa-\lambda \right)  \left( \kappa-3\,\lambda \right) + \left( 2\,{
\kappa}^{2}-\kappa\,\lambda-2\,{\lambda}^{2} \right) s\\
&\quad &+ \left( \kappa+3\,\lambda \right) {s}^{2} \bigg) \kappa,
\end{eqnarray*}
and notice that these agree when $s=x_0$ with the numerators in \eqref{veps2}. 
}

\subsection{Numerical computations of $\eta$ and $\phi$}\seclab{numetaphimmh}
In \figref{mmh_y0} (a) and (b) we have compared the solution
$(x_0,y_0)=(x_b,y_b)$ of \eqref{eX0Y0mmh} initiated at the base point
$(x_{b0},\eta(x_{b0}))\in M$ with (i) the solution  $(x_l,y_l)$
initiated at $(x_{b0},\eta(x_{b0}))+v\vert v \vert^{-1}s$ (dashed) and
with (ii) (by the naive projection) the solution  $(x_n,y_n)$
initiated at $(x_{b0},\eta(x_{b0})+s)$ (full line) for different values
of $s$ and for a time integration of length $t = \epsilon^{-1}$. The subscripts $b$, $l$ and $n$ refer to \textit{base}, \textit{linear} projection respectively \textit{naive} projection. The
parameter $s$ measures the distance from the slow manifold and $v$ is the
tangent vector to the fiber at the base point $(x_{b0},\eta(x_{b0}))$
determined through $\phi$ and the equation \eqref{tsp}. We have set
$\kappa=2,\,\lambda=1$ and $x_{b0}=1.5$, and have computed ${\eta}$ and
$\phi$ numerically using the Eqs. \eqsref{fraser2}{phii} in \corref{cornum}. We
have used $4$ iterations on both $\eta$ and $\phi$ resulting in
error-fields of $\sim 10^{-7}$ respectively $\sim 10^{-10}$ for
$\epsilon=0.1$. The comparison is made through 
\begin{eqnarray}
\upsilon_l &=& \Vert (x_b,y_b)(\epsilon^{-1})-(x_l,y_l)(\epsilon^{-1})\Vert,\quad \text{(\textit{l}inear projection),}\nonumber\\
&\text{and}\nonumber\\
 \upsilon_n &=& \Vert (x_b,y_b)(\epsilon^{-1})-(x_n,y_n)(\epsilon^{-1})\Vert, \quad \text{(\textit{n}aive projection).}\nonumber
\end{eqnarray} 
In (a) $\epsilon=0.1$ while $\epsilon=0.01$ in (b). We see that $\upsilon_l\ll \upsilon_n$ and compute $\upsilon_l \approx \mathcal O(s^{2.009})$,  whereas $\upsilon_n \approx \mathcal O(s^{1.000})$.

\begin{figure}[h!]
\begin{center}
\subfigure[$\epsilon=0.1$]{\includegraphics[width=.475\textwidth]{./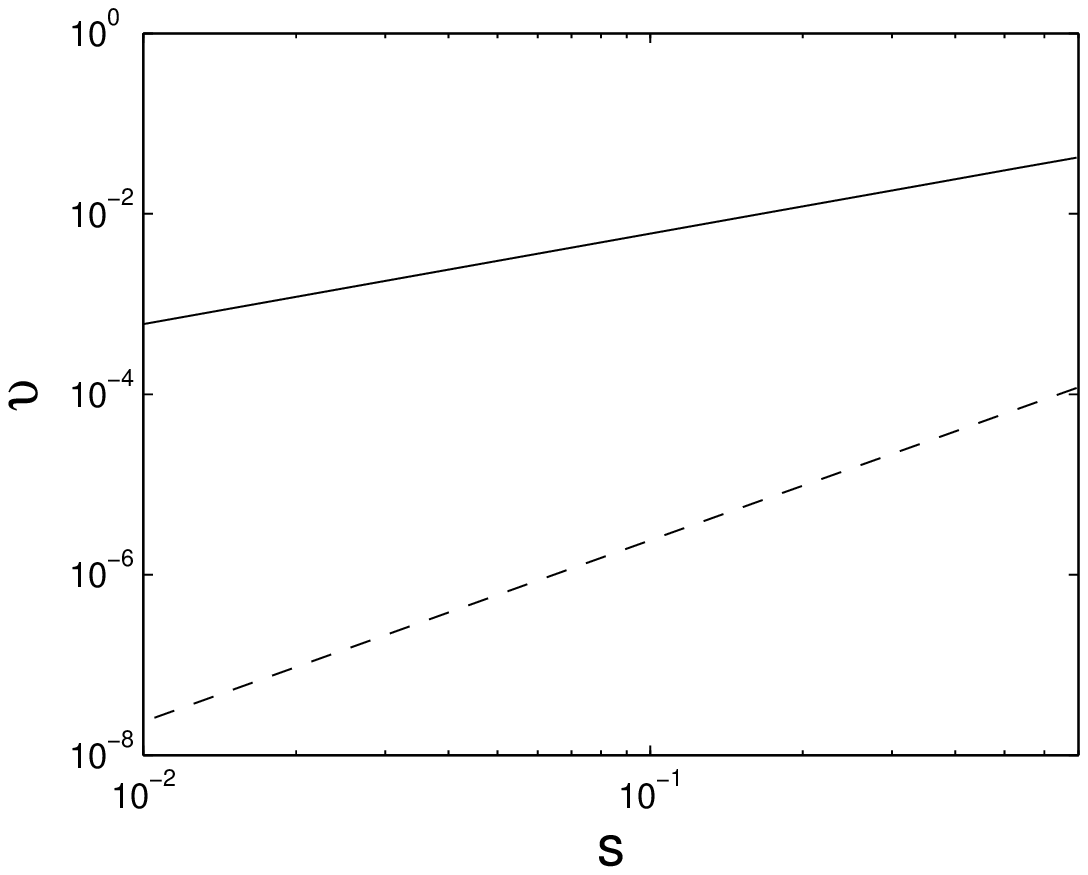}}
\subfigure[$\epsilon=0.01$]{\includegraphics[width=.475\textwidth]{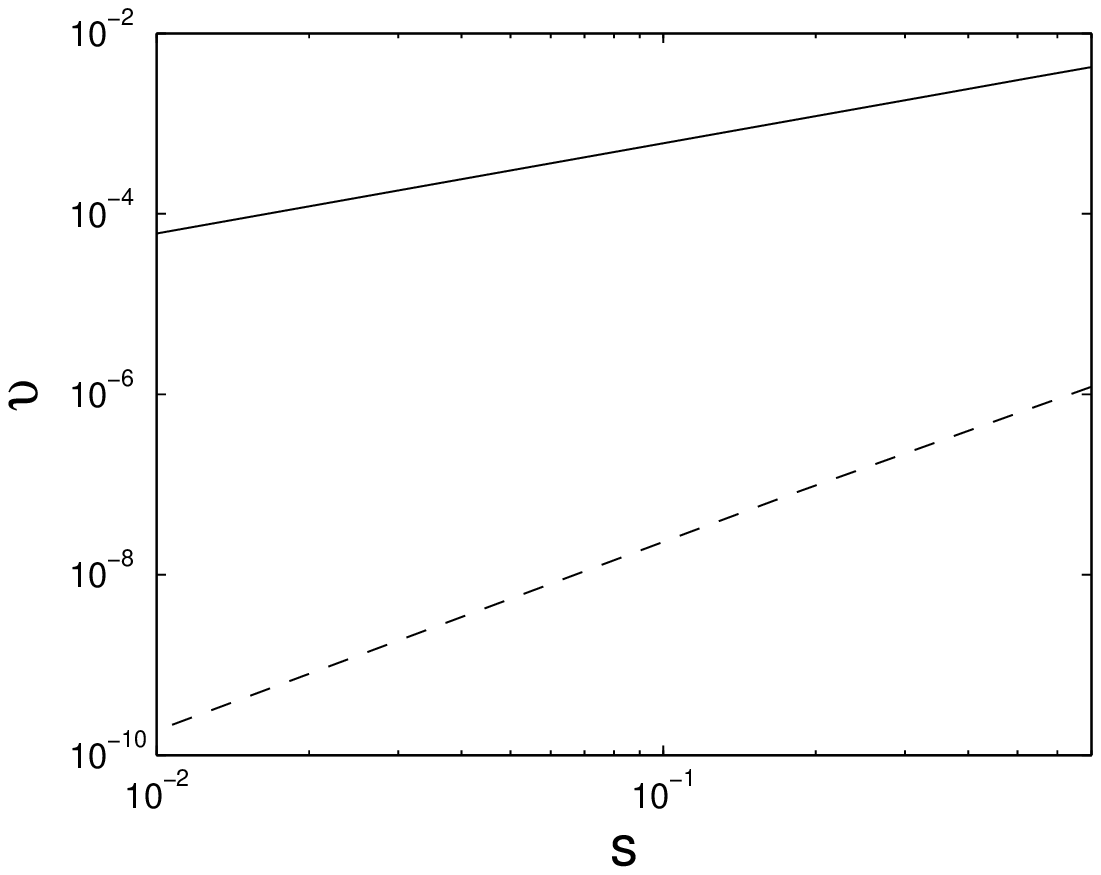}}
\end{center}
\caption{The errors from approximating fibers using the tangent spaces ($\upsilon_l$: dashed lines) and naive projections ($\upsilon_n$: full lines) as functions of the distance from the slow manifold. Here $\kappa=2$, $\lambda=1$ and the initial condition on $x_0=x_b$ is $x_{b0}=1.5$.}
\figlab{mmh_y0}
\end{figure}

The errors in $\eta^n$ and $\phi^n$:
\begin{eqnarray*}
 \mathcal E_\eta &=& \sup_{x}\,\vert -\epsilon \partial_x\eta X_0+Y_0\vert,\\ &\text{respectively}
\\ \mathcal E_\phi &=& \sup_{x}\,\vert \epsilon (\partial_x X_0+\partial_y X_0 \partial_{x} {\eta})\phi -\epsilon \partial_x \phi X_0+\partial_y X_0 - \phi (-\epsilon \partial_{x} {\eta} \partial_y X_0+\partial_y Y_0)\vert,
\end{eqnarray*}
 for $\epsilon=0.1$ are shown in \figref{errfld_eps01} as a function of the iteration number. These are relevant errors since if $\mathcal E_\eta=0$ then $y_0=\eta$ is an exact slow manifold (cf. \eqref{fraser2}) and if $\mathcal E_\phi=0$ then the transformation $x_0=x_1+\epsilon\phi(x_1) y$ removes the term in $\dot x_1$ that is linear in $y$ exactly (cf. \eqref{phii}). The error in $\phi$ and $\eta$ are observed to be the order of machine precision $\sim 10^{-14}$ after $8$ respectively $10$ iterations. There is no or little improvement beyond this number. 
 It should also be mentioned that to approximate derivatives we use the five-point stencil:
\begin{eqnarray*}
 f'(x) \approx \frac{1}{2h}(-f(x+2h)+8f(x+h)-8f(x-h)+f(x-2h)),
\end{eqnarray*}
 the error being $\frac{h^4}{30}f^{(5)}(x_0)=\mathcal O(h^4)$, $x_0\in [x-2h,x+2h]$. We have used $h\approx 10^{-2}$ which gives an error of $\sim 10^{-8}$.

\begin{figure}[h!]
\begin{center}
\includegraphics[width=.5\textwidth]{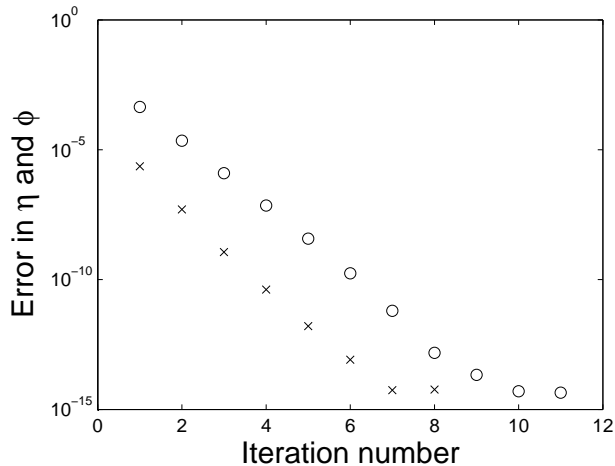}
\end{center}
\caption{The errors $\mathcal E_\eta$ $(\circ)$ and $\mathcal E_\phi$ ($\times$) for $\epsilon=0.1$ as a function of the iteration number. The functions $\eta$ and $\phi$ are computed numerically using the Eqs. \eqsref{fraser2}{phii} in \corref{cornum}. Here $\kappa=2$, $\lambda=1$ and the initial condition on $x_0$ is $x_{b0}=1.5$. In the computations leading to \figref{mmh_y0} we have used $4$ iterations on both $\eta$ and $\phi$. }
\figlab{errfld_eps01}
\end{figure}

In \figref{mmh_eps} we have taken $s=0.5$ and consider $\upsilon_l$ and $\upsilon_n$ as functions of $\epsilon$. Again, we see that $\upsilon_l\ll \upsilon_n$ and compute $\upsilon_l \approx \mathcal O(\epsilon^2)$ whereas $\upsilon_n \approx \mathcal O(\epsilon)$. This discrepancy is, however, exceptional as it is due to the fact that the Michaelis-Menten-Henri system is linear in the fast variable $y_0$. 

\begin{figure}[h!]
\begin{center}
{\includegraphics[width=.5\textwidth]{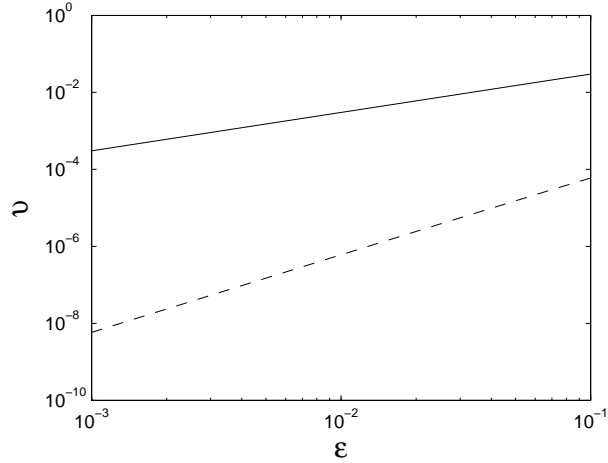}}
\end{center}
\caption{The errors from approximating fiber directions using the tangent spaces (dashed lines) and naive projections (full lines) as functions $\epsilon$. Here $s=0.5$, $\kappa=2$, $\lambda=1$ and the initial condition on $x_0$ is $x_{b0}=1.5$. We have used $4$ iterations in the computations of $\eta$ and $\phi$.}
\figlab{mmh_eps}
\end{figure}

Finally, solutions $(x_b,y_b)$ and $(x_l,y_l)$ of
\eqref{eX0Y0mmh} are shown in \figref{fiber_exeps0_4}. The solution
$(x_l,y_l)$ is initiated on the fiber of the base point with
$x_{b0}=1.5$, at a distance $s\approx 0.52$ from the base point. The full
line near $y_0=0$ is the slow manifold. The solutions $(x_b,y_b)$ and
$(x_l,y_l)$ are at $9$ different times indicated by $\times$
respectively $\circ$'s. For illustrative purposes we have chosen the
relatively large value of $\epsilon=0.4$. It is observed that, at least
approximately, the solution $(x_l,y_l)$ contracts along the fiber
directions indicated by the dashed lines moving from upper left to lower
right. The fiber directions are approximated as hyperplanes through $\phi$.

\begin{figure}[h!]
\begin{center}
{\includegraphics[width=.5\textwidth]{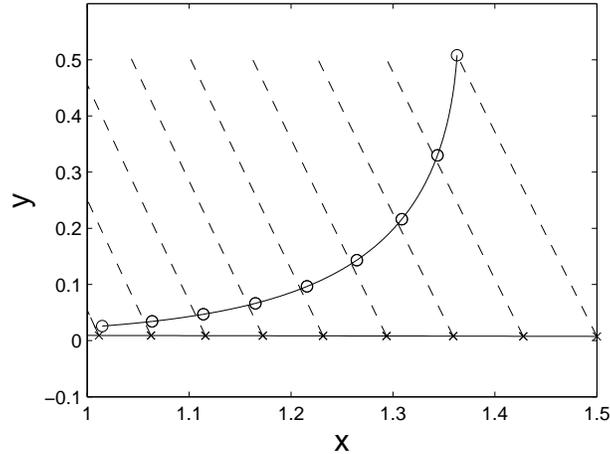}}
\end{center}
\caption{Solutions $(x_b,y_b)$ and $(x_l,y_l)$ of \eqref{eX0Y0mmh} for $\epsilon=0.4$.  The solution $(x_l,y_l)$ is initiated on the fiber corresponding to the base point with $x_{b0}=1.5$, at a distance $s\approx 0.52$ from the base, and is observed to contract to the solution $(x_b,y_b)$ along the fiber directions. The fiber directions are indicated by the dashed lines running from upper left to lower right.}
\figlab{fiber_exeps0_4}
\end{figure}

\section{Approximating the curvature of the fibers}\seclab{curve}
In this section we show how the SOF method may be further extended to also approximate the curvature of the fibers. According to \thmref{thm1} the transformation
\begin{eqnarray*}
 y\mapsto y_0 = y+\eta(x_0),\quad x^0\mapsto x_0 = x^0+\epsilon \phi(x^0) y,
\end{eqnarray*}
generated by the SOF method, transforms \eqref{xys12} into \eqref{xys13}:
\begin{eqnarray}
 \dot x^0 &=& \epsilon (\Lambda(x^0)+Q_0(x^0)y^2
+C(x^0,y)),\eqlab{Q0}\\
\dot y&=&A(x^0)y+R(x^0,y),\nonumber
\end{eqnarray}
with $C=\mathcal O(y^3)$ and $R=\mathcal O(y^2)$ up to exponentially small error. Note how we, as promised in the introduction, use superscripts $0$ to indicate the beginning of a new iteration. One can obtain an explicit expression for the quadratic term
$Q_0y^2$, a vector of symmetric bilinear forms, in terms of the known
functions: $X_0$, $Y_0$, $\eta$ and $\phi$, through the equation $x_0 =
x^0+\epsilon \phi(x^0) y$. 
 We will write the $i$th component of $Q_0y^2$ as
\begin{eqnarray}
 (Q_0y^2)_i = \langle y,Q_0^i y\rangle,\quad 1\le i\le n_s,\eqlab{Q0i}
\end{eqnarray}
where $Q_0^i=Q_0^i(x^0)$ is a symmetric $n_f\times n_f$-matrix. Recall that
we use the notation $(z)_i$ to denote the $i$th component of a vector $z$.
Here we have also introduced the real inner product $$\langle a,b\rangle =
\sum_{i=1}^{n_f} (a)_i (b)_i.$$ By introducing 
\begin{eqnarray*}
 x^0 = x^1 + \epsilon \psi_0(x^1)y^2,
\end{eqnarray*}
with $\psi_0$ a vector of symmetric bilinear forms, we therefore obtain
\begin{eqnarray}
 \dot x^1 = (I_s-\epsilon\partial_{x}\psi_0 y^2+J^{-1}(\epsilon \partial_{x}\psi_0 y^2)^2)\epsilon (\Lambda + \left\{\epsilon \partial_x \Lambda\psi_0 y^2+Q_0y^2 -2\psi_0(y)(A y)\right\}+\mathcal O(y^3)),\eqlab{xpc}
\end{eqnarray}
where $J=I_s+\epsilon \partial_{x}\psi_0 y^2$ is the Jacobian of the transformation $x^1\mapsto x^0$. Here $\psi_0(y)(A y)$ is understood as $$(\psi_0(y)(A y))_i =\frac12 \langle y,A^T \psi_0^i y\rangle + \frac12 \langle y,\psi_0^i A y\rangle,$$ using the notation in \eqref{Q0i}. The new error, that is the term in $\epsilon^{-1}\dot x^1$ which is quadratic in $y$, can again be decomposed into two separate contributions. One term comes from the expansion of $\dot x^0 - (\partial_y (\psi_0 y^2)) \dot y$, the curly bracket in \eqref{xpc}, while the other one is due to the inverse of the Jacobian. As for the linear case, we choose the unknown function $\psi_0$ so that the curly bracket in \eqref{xpc} vanishes for all $y$. 
This gives
\begin{eqnarray}
 \epsilon \sum_{j=1}^{n_s} \partial_{(x)_j} (\Lambda)_i \psi_0^j +Q_0^i -A^T \psi_0^i-\psi_0^i A = 0.\eqlab{psi0}
\end{eqnarray}
By Theorem 4.4.6 in \cite{hor1} this system has a unique solution
$\psi_0^i$ for $\epsilon=0$ iff $\sigma(A)\cap \sigma(-A)=\emptyset$.
Therefore we must exclude the elliptic case and \textit{the neutral saddle scenario}
where both $\lambda$ and $-\lambda$, $\text{Re}\,\lambda\ne 0$, are
eigenvalues of $A$. 
Note moreover that by taking transposes:
\begin{eqnarray*}
Q_0^i -A^T (\psi_0^i)^T-(\psi_0^i)^T A = 0,
\end{eqnarray*}
verifying that the solution is symmetric. The solution perturbs to a symmetric solution for $\epsilon\ne 0$ but small; $\epsilon \sum_{j=1}^{n_s} \partial_{(x)_j} (\Lambda)_i \psi_0^j$ is also symmetric. The solution satisfies
\begin{eqnarray*}
 \Vert \psi_0 \Vert_\chi 
\le K
\Vert Q_0\Vert_\chi,
\end{eqnarray*}
for some constant $K$ depending on $A^{-1}$. Then 
the new error becomes 
\begin{eqnarray*}
  Q_1 &=& -\epsilon\partial_{x}\psi_0 \Lambda,\quad 
\Vert Q_1 \Vert_{\chi-\xi} \le \frac{\epsilon K C_\Lambda}{\xi} \Vert Q_0\Vert_{\chi},
\end{eqnarray*}
which also vanishes at exact equilibria where $\Lambda \equiv 0$. As for the linear case we have that $\Lambda_1=\Lambda$ and $A_1=A$. Using such transformations successively it is therefore possible to approximate the curvature of the fibers up to exponentially small error. Formally we proceed as in the proof of \thmref{thm1}; the most important ingredient in the proof being the continued reduction of the domain together with the applications of Cauchy estimates to control the derivatives. 
\begin{theorem}\thmlab{thm2}
Assume that the assumptions of \thmref{thm1} hold true and that we have used \textnormal{the SOF} method to transform \eqref{xys12} into \eqref{xys13}. Assume furthermore that $\sigma(A)\cap \sigma(-A)=\emptyset$. Fix $0\le \underline{\underline{\chi}}<\underline{\chi}$. Then there exist an $\epsilon^0\le \epsilon_0$, where $\epsilon_0$ is from \thmref{thm1}, and an $N_3=\mathcal O(\epsilon^{-1})\in \mathbb N$ so that for all $\epsilon\le  \epsilon^0$ the sequence of transformations 
 $x^n= x^{n+1} + \epsilon \psi_n(x^{n+1})y^2$, $0\le n\le N_3-1$, where $\psi_n^i\in \mathbb R^{n_f\times n_f}$ solves
\begin{eqnarray}
\epsilon \sum_{j=1}^{n_s} \partial_{(x)_j} (\Lambda)_i \psi_n^j +Q_n^i -A^T \psi_n^i-\psi_n^i A = 0,\quad 1\le i\le n_s,\eqlab{psieqn}
\end{eqnarray}
the quantity $$\epsilon Q_ny^2=\left\{\begin{array}{cc}
                     \textnormal{given by Eq}.\, \eqref{Q0}\quad &\text{for
$n=0$},\\
-\epsilon^2 \partial_x \psi_{n-1} \Lambda y^2 &\text{for $n\ge 1$},
                    \end{array}\right.
$$ 
being the term in the expression for $\dot x^n$ which is quadratic in $y$, eventually transforms \eqref{xys13} into
\begin{eqnarray}
 \dot x^{N_3}& =&\epsilon (\Lambda(x^{N_3})+\tilde C(x^{N_3},y))+\mathcal O(e^{-\tilde c_1/\epsilon}),\eqlab{xys14}\\ 
\dot y &=&A(x^{N_3})y+\tilde R(x^{N_3},y)  +\mathcal O(e^{-\tilde c_1/\epsilon}).\nonumber
\end{eqnarray}
Here $(x^{N_3},y)\in (\mathcal U+i\underline{\underline{\chi}})\times (\mathcal V+i{\underline{\nu}})$, $\tilde C=\mathcal O(y^3)$ and
 \begin{eqnarray*}
 \Vert \tilde C-C\Vert_{\underline{\underline{\chi}},\underline{\nu}},\,\Vert \tilde R-R\Vert_{\underline{\underline{\chi}},\underline{\nu}}&\le \tilde c_2 \epsilon,
\end{eqnarray*}
for some constants $\tilde c_1$ and $\tilde c_2$. Also the $\mathcal O(e^{-\tilde c_1/\epsilon})$ error terms in \eqref{xys14} vanish at true equilibria. 

The transformation $x \mapsto x^0=x+\epsilon
\psi(x) y^2$ with 
\begin{eqnarray}
\psi = \sum_{i=0}^{N_3-1}
\psi_i,\eqlab{psi}
\end{eqnarray}
differs from the {composition} of $x^{N_3}\mapsto \cdots \mapsto x^1\mapsto x^{0}$ by
$\mathcal O(y^3)$-terms and the equations for $(x,y)$
therefore takes a similar form to \eqref{xys14}: The set $\{y=0\}$
is \textnormal{almost} invariant and the $y$-space provides an
\textnormal{almost} $y^2$-approximation to the fibers.
In terms of the $(x_0,y_0)$-variables this quadratic approximation,
parametrized by $y$, takes the following form:
\begin{eqnarray}
  x_0 &=& x+\epsilon \phi y+\epsilon \psi y^2,\eqlab{curveapp}\\
y_0&=&y+\eta+\partial_x \eta(\epsilon \phi y+\epsilon \psi y^2)+\frac12 \partial_x^2 \eta (\epsilon \phi y)^2,\nonumber
\end{eqnarray}
with all functions on the right hand sides evaluated at $x$, for the base point $(x,\eta(x))$. 
\end{theorem}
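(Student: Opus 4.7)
The plan is to mimic the proof of \thmref{thm1} with the linear-in-$y$ iterations replaced by the $y^2$-iterations defined in the statement. Starting from the system \eqref{xys13} produced by the SOF method on the domain $(\mathcal U+i\underline\chi)\times(\mathcal V+i\underline\nu)$, I would first redefine a working domain width $\chi=(\underline{\underline\chi}+\underline\chi)/2$, fix $\nu=\underline\nu$, and introduce constants $K$, $C_\Lambda$ and $C_\Lambda'$ bounding $\|A^{-1}\|_\chi$, $\|\Lambda\|_\chi$ and $\|\partial_x\Lambda\|_\chi$ on this domain. The initial error is $\gamma_0^Q=\|Q_0\|_\chi$ where $Q_0$ is read off from \eqref{Q0}. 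The composition of the transformations \eqref{x02x} and $y_0=y+\eta(x_0)$ is analytic and invertible for $\epsilon$ small, so one directly obtains an analytic bound on $Q_0$.

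The main step is the solvability and estimation of \eqref{psieqn}. Viewing it componentwise as a perturbed Sylvester equation $A^T\psi^i+\psi^i A=Q^i+\epsilon\sum_j\partial_{(x)_j}(\Lambda)_i\psi^j$, I would first appeal to \cite[Theorem 4.4.6]{hor1} and the hypothesis $\sigma(A)\cap\sigma(-A)=\emptyset$ to get a bounded solution operator $\mathcal L_A^{-1}\colon Q\mapsto\psi$ at $\epsilon=0$ with $\|\mathcal L_A^{-1}\|\le K$ (for a possibly enlarged $K$). Then I would run a contraction mapping argument as in the $\phi_0$-lemma of \secref{prvthm1}: set $\psi_0=\mathcal L_A^{-1}Q_0+z$, observe that the fixed-point map has Lipschitz constant $\mathcal O(\epsilon KC_\Lambda')<1$, and conclude $\|\psi_n\|_{\chi_n}\le K\gamma_n^Q$ at each stage. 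Symmetry of $\psi_n^i$ is preserved because both $Q_n^i$ and the perturbation term are symmetric (taking transposes in \eqref{psieqn} and using uniqueness).

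Next, by Cauchy's estimate on each step I would obtain $\gamma_{n+1}^Q=\|Q_{n+1}\|_{\chi_{n+1}}\le\epsilon KC_\Lambda\xi_n^{-1}\gamma_n^Q$, and, choosing $\xi_n=\overline\xi=2\epsilon KC_\Lambda$ so $\gamma_{n+1}^Q\le\tfrac12\gamma_n^Q$, iterate $N_3=\lfloor(\chi-\underline{\underline\chi})/\overline\xi\rfloor=\mathcal O(\epsilon^{-1})$ times to reach $\gamma_{N_3}^Q\le 2^{-N_3}\gamma_0^Q=\mathcal O(e^{-\tilde c_1/\epsilon})$. Control of the total shift $\|x^{N_3}-x^0\|\le\sum\epsilon\sigma^2\gamma_n^Q\le 2\epsilon\sigma^2\gamma_0^Q$ (with $\sigma=\sup_{\mathcal V+i\underline\nu}\|y\|$) shows the accumulated transformation is $\epsilon$-close to the identity; the estimates on $\tilde C-C$ and $\tilde R-R$ then follow from the $\mathcal O(y^3)$ and $\mathcal O(y^2)$ lower terms in the Taylor expansions and the chain rule, exactly as in \thmref{thm1}. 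The equilibrium property is inherited because each $Q_n$ for $n\ge 1$ has the explicit form $-\epsilon\partial_x\psi_{n-1}\Lambda$, which vanishes whenever $\Lambda(x_e)=0$; the exponentially small remainder inherits the same structure from the exponentially small $\rho$ entering \eqref{xys13}.

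For the final assertion, I would compare the composition $x^{N_3}\mapsto\cdots\mapsto x^0$ with the single map $x\mapsto x^0=x+\epsilon\psi(x)y^2$ with $\psi$ as in \eqref{psi}. Each individual substitution alters the next by terms involving $\psi_k(x^{k+1})-\psi_k(x^{k+2}+\epsilon\psi_{k+1}y^2)=\mathcal O(\epsilon\psi_k'\psi_{k+1}y^4)$, and products of two $y^2$-transformations produce $\mathcal O(y^4)\subset\mathcal O(y^3)$; summing telescopically shows the difference is $\mathcal O(y^3)$. Composing further with $x\mapsto x+\epsilon\phi(x)y$ from \thmref{thm1} and with $y_0=y+\eta(x_0)$, then Taylor expanding $\eta$ to second order in the displacement $x_0-x=\epsilon\phi y+\epsilon\psi y^2$ and retaining only terms through $y^2$, yields \eqref{curveapp}. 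The hard part I anticipate is the Sylvester step: verifying that the solvability and the $\epsilon$-perturbation preserve both symmetry and the $K$-bound uniformly in $x$ along the iteration, while simultaneously keeping track that $\Lambda$ and $A$ (hence $\mathcal L_A^{-1}$) do not change under these transformations, which is what ultimately makes the geometric decay $\tfrac12\gamma_n^Q$ rigorous.
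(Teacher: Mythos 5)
Your proposal is correct and follows essentially the same route as the paper: the paper's own argument consists of the Sylvester-equation step \eqref{psi0} solved via \cite[Theorem 4.4.6]{hor1} under $\sigma(A)\cap\sigma(-A)=\emptyset$ (with symmetry preserved by transposition and the $K$-bound $\Vert\psi_0\Vert_\chi\le K\Vert Q_0\Vert_\chi$), the new error $Q_1=-\epsilon\partial_x\psi_0\Lambda$ estimated by a Cauchy estimate, and then ``proceed as in the proof of Theorem 1'' with domain shrinking $\overline{\xi}=\mathcal O(\epsilon)$ over $N_3=\mathcal O(\epsilon^{-1})$ steps, plus the same composition-versus-sum and Taylor-expansion argument for \eqref{curveapp}. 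Your added details (the contraction argument for the $\epsilon$-perturbed Sylvester operator and the telescoping $\mathcal O(y^4)\subset\mathcal O(y^3)$ comparison) are consistent elaborations of what the paper leaves implicit, not a different method.
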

{\begin{remark}
We highlight that this theorem also holds true for the saddle type slow manifolds where $A$ \eqref{A} has eigenvalues with both negative and positive real parts. We just have to exclude the {neutral saddle} scenario. This is perhaps rather surprising seeing that the Fenichel normal form \eqref{fnf2} takes a slightly different form: $\dot u$ includes quadratic terms of the form $vw$. However, \eqref{curveapp} provides absolutely no control of the location of the stable and unstable manifolds. 

For the neutral saddle case, one may
 follow the general philosophy of normal form theory and relax the requirements of the transformations and accordance with \eqref{fnf2} seek only to remove the terms in the slow vector field that are quadratic in the fast variables (say $y_s^2$, playing the role of $v$ in \eqref{fnf2}) associated with the contraction respectively the fast variables associated with the expansion (say $y_u^2$, playing the role of $w$ in \eqref{fnf2}) from the slow manifold. That is one would leave quadratic terms of the form $y_sy_u$ behind. This procedure requires a change of basis to split the fast variables into $y_s$ and $y_u$ and as such it does not fit within the procedures we have developed in this paper. We therefore leave out the details.
 
 Finally, we point out that \eqref{psieqn} for the determination of $\psi$ \eqref{psi} by summation over $n$ can be written in form similar to Eqs. \eqsref{fraser2}{phii}. In contrast to \eqref{phii} we here need the first and second partial derivatives of the vector-field. 
%
%
\end{remark}}

\subsection{Analytic expression of $\psi$ to $2$nd order for the Michaelis-Menten-Henri model}
We now apply this principle to the Michaelis-Menten-Henri model. We start from \eqref{xysmmh} where the quadratic term in $\epsilon^{-1}\dot x$ is of order $\mathcal O(\epsilon)$:
\begin{eqnarray*}
 Q_1 = -{\frac { \left( x+\kappa-\lambda \right) \lambda\,}{ \left( x
+\kappa \right) ^{2}}}\epsilon+\mathcal O(\epsilon^2).
\end{eqnarray*}
We have therefore denoted this term by $Q_1$ rather than $Q_0$. Then $\psi_1$ solves \eqref{psieqn} with $n=1$ and $\psi_0=0$:
\begin{eqnarray*}
 \psi_1 &=& \frac{Q_1}{2A-\epsilon \partial_{x} \Lambda}={\frac { \left( x+\kappa-\lambda \right) \lambda}
{ 2\left( x+\kappa \right) ^{3}}}{\epsilon}+\mathcal O(\epsilon^2),
\end{eqnarray*}
so that
\begin{eqnarray*}
Q_2 &=&-\partial_x \psi_1 \Lambda = -{\frac {{\lambda}^{2} \left( 2(x+\kappa)-3\,\lambda \right) x
}{ 2\left( x+\kappa \right) ^{5}}}{\epsilon}^{2}
+\mathcal O(\epsilon^3).
\end{eqnarray*}
Finally 
\begin{eqnarray*}
 \psi_2 &=&\frac{Q_2}{2A-\epsilon \partial_{x} \Lambda}=\,{\frac {{\lambda}^{2} \left( 2(x+\kappa)-3\,\lambda \right)
x}{ 4\left( x+\kappa \right) ^{6}}}{
\epsilon}^{2}
+\mathcal O(\epsilon^3).
\end{eqnarray*}
Let $\psi^2 = \psi_1+\psi_2$: 
\begin{eqnarray*}
\psi^2 &=&{\frac { \left( x+\kappa-\lambda \right) \lambda}{2
 \left(x+\kappa \right) ^{3}}}\epsilon+\frac14 \left(x+\kappa
 \right) ^{-6} \bigg(2\,\kappa \left( -\lambda+\kappa \right)  \left( 8\,{
\lambda}^{2}-9\,\kappa\,\lambda+2\,{\kappa}^{2} \right) \\
&\quad &+ \left( 38\,\kappa\,{\lambda}^{2}-44\,{\kappa}^{2}\lambda
+12\,{\kappa}^{3}-7\,{\lambda}^{3} \right)x +\left( 4\,{
\lambda}^{2}+12\,{\kappa}^{2}-22\,\kappa\,\lambda \right) {x}^{2}+4\kappa{
x}^{3}\bigg)
+\mathcal O(\epsilon^3),
\end{eqnarray*}
then, in terms of the original $(x_0,y_0)$-variables in \eqref{eX0Y0mmh}, we have cf. \eqref{curveapp} obtained the following quadratic approximation, correct up to terms including $\epsilon^3$, of the fiber with base point $(x,\eta(x))$:
\begin{eqnarray*}
 x_0 &=& x+\epsilon \phi^2(x)y+\epsilon \psi^2( 
x) y^2\\
&=&x+ \left( -{\frac { \left( x+\kappa-\lambda \right) }{x+\kappa
}}\epsilon+{\frac {\kappa\, \left( -\lambda+\kappa \right)  \left( \kappa-2\,
\lambda \right) + \left( 2\,\kappa-\lambda \right)  \left( -\lambda+
\kappa \right)x+\kappa {x}^{2}}{ \left(x+\kappa \right) ^{4}}}
{\epsilon}^{2}+\mathcal O(\epsilon^3)
 \right) y\\
&\quad &+ \left( {\frac { \left(x+\kappa-\lambda \right) \lambda}{ 2\left(x+\kappa \right) ^{3}}}
{\epsilon}^{2}+\mathcal O(\epsilon^3) \right) {y}^{2},\\
y_0 &=&y+\eta^2+\partial_x \eta^2(\epsilon \phi^2y+\epsilon \psi^2 y^2)+\frac12 \partial_x^2 \eta^2 (\epsilon \phi^2y)^2\\
&=&{\frac {\kappa\,x\lambda}{ \left(x+\kappa \right) ^{4}}}\epsilon-{
\frac {\kappa\,\lambda\,x \left( {\kappa}^{2}+x\kappa-2\,\kappa\,
\lambda+3\,\lambda\,x \right) }{ \left(x+\kappa
 \right) ^{7}}}{\epsilon}^{2}+\mathcal O(\epsilon^3)+ \bigg( 1-{\frac { \left(x+\kappa-\lambda \right) 
 \left( \kappa-3x \right) \kappa\,\lambda}{ \left(x
+\kappa \right) ^{6}}}{\epsilon}^{2} \\&\quad &+\mathcal O(\epsilon^3) \bigg) y+\left(2\,{\frac { \left(x+\kappa-\lambda
 \right)  \left( 2\,\kappa-3 x \right) \kappa\,\lambda\,}{ \left(x+\kappa \right) ^{7}}}{\epsilon}^{2
}+\mathcal O(\epsilon^3)\right){y}^{2},
\end{eqnarray*}
parametrized by $y$. For simplicity, we have here chosen to omit $\mathcal
O(\epsilon^3)$-terms. In particular, $(x,y)=(\tilde x+\epsilon\psi^2(\tilde
x)\tilde y^2,\tilde y)$ transforms \eqref{xysmmh} into:
\begin{eqnarray*}
 \dot{\tilde x} = \epsilon\left(\Lambda(\tilde x) +\mu_3(\tilde x) \tilde y + Q_3 \tilde y^2+\mathcal O(\epsilon^2\tilde y^3)
 \right),\\
\dot{\tilde y}=\rho(\tilde x) + \left(A(\tilde x)+\mathcal O(\epsilon^4)\right)\tilde y+\mathcal O(\epsilon \tilde y^2),
\end{eqnarray*}
with 
\begin{eqnarray*}
 Q_3(\tilde x) ={\frac { \left( {\tilde x}+\kappa-\lambda \right) \kappa\,{\lambda
}^{2}}{ 2\left( {\tilde x}+\kappa \right) ^{5}}}{\epsilon}^{3}
+\mathcal O(\epsilon^4).
\end{eqnarray*}

\subsection{Numerical computation of $\psi$}
\figref{mmh_y03_eps01} shows the error
\begin{eqnarray*}
 \upsilon = \Vert (x_b,y_b)(\epsilon^{-1})-(x_q,y_q)(\epsilon^{-1})\Vert
\end{eqnarray*}
with $(x_q,y_q)$ being the solution initiated at points along the \textit{quadratic approximation} of the fibers with base $(x_b,y_b)$ obtained from numerically computing the $\psi_i$'s, 
as a function of the distance $s\in [0.5, 10]$ from the slow manifold. We have used $4$ iterations in computing $\psi$ giving rise to an error of $\sim 10^{-8}$ for $\epsilon=0.1$. The error decreases as $\approx \mathcal O(s^{3.033})$ in agreement with the analysis. 

\begin{figure}[h!]
\begin{center}
{\includegraphics[width=.5\textwidth]{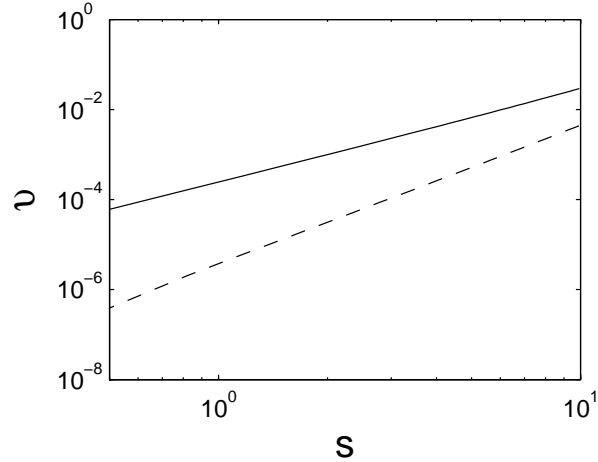}}
\end{center}
\caption{The error from approximating the fibers in the Michaelis-Menten-Henri model using a linear (full line) and quadratic (dashed line) approximation of the fiber as a function of the distance $s\in [0.5, 10]$ from the slow manifold. Here $\kappa=2$, $\lambda=1$, $\epsilon=0.1$ and the initial condition on $x$ is $x_b^0=1.5$. The error from the quadratic approximation decreases as $\approx \mathcal O(s^{3.033})$. We have used $4$ iterations in the computations of $\eta$, $\phi$ and $\psi$. }
\figlab{mmh_y03_eps01}
\end{figure}

\section{Lindemann mechanism} \seclab{linde}
In this section we consider the Lindemann mechanism 
\begin{eqnarray}
 \dot x &=&X(x,y)=-x(x-y),\eqlab{lm}\\
 \dot y &=&Y(x,y)=x(x-y)-\epsilon y,\nonumber
\end{eqnarray}
and assume as usual $0< \epsilon\ll 1$. We will consider $x\ge 0$ and $y\ge 0$ and note here that $x=0=y$ is the unique equilibrium. Note also how the norm of $\dot x$ is not slow throughout phase space. We have therefore denoted it by $X$ rather than $\epsilon X$. This form does therefore not directly apply to our setting. In \cite{gou1} the authors apply the CSP method and a modified ``CSP-like'' method, which is based on the SO method, to this problem and show that they can lead to a simplified non-stiff system. We will aim at something similar here, highlighting that even though the equations are not in the form of \eqref{xys0} the SOF method can still be applied. We will as for the Michaelis-Menten-Henri mechanism also compare our results with the CSP method.

The crucial thing for the success of the method is the existence of a transformation:
\begin{eqnarray}
 \begin{pmatrix}
  w\\
  z 
 \end{pmatrix} = \begin{pmatrix}
  x+y\\
  2y 
 \end{pmatrix}\eqlab{wz}
\end{eqnarray}
transforming the non-standard equations into
\begin{eqnarray}
 \dot w &=&\epsilon W(w,z)=- \frac12 \epsilon z,\eqlab{wzeqn}\\
 \dot z &=& Z(w,z)=2w^2-(3w+\epsilon)z+z^2,\nonumber
\end{eqnarray}
taking the form \eqref{xys0}. Here $Z(w,z)=0$ gives a slow manifold 
\begin{eqnarray*}
z = w+\mathcal O(\epsilon).
\end{eqnarray*}
with $\mathcal O(\epsilon)$ error. {The graph $z=w$ corresponds to $y=x$ which is a sub-space filled with equilibria for $\epsilon=0$. This is directly related with the existence of the transformation in \eqref{wz}.} 

The variable $z$ is truly fast near this graph provided {$w\ge c> 0$, $c$ independent of $\epsilon$}, so that $\partial_z Z\ne 0$. This graph can by \eqref{wz} be parametrized by $x$ as $y=x+\mathcal O(\epsilon)$. This is enough for the SOF method to apply to \eqref{lm}. 

\subsection{Approximating the slow manifold}\seclab{lindeappsm}
To start the SOF method, consider the equation $Y(x,y)=0$ having the solution $y=\eta_0(x)=\frac{x^2}{x+\epsilon}=x+\mathcal O(\epsilon)$. Then through $(x,y)=(x_0,\eta_0(x_0)+y_0)$ we obtain
\begin{eqnarray*}
 \dot x_0 &=&X_0(x_0,y_0)=x_0y_{{0}}-{\frac {\epsilon\,x_0^{2}}{x_0+\epsilon}}
,\\
 \dot y_0 &=&Y_0(x_0,y_0)={\frac {x_0^{3} \left( x_0+2\,\epsilon \right) \epsilon}{
 \left( x_0+\epsilon \right) ^{3}}}-{\frac { \left( 2\,x_0^{
3}+5\,\epsilon\,x_0^{2}+3\,{\epsilon}^{2}x_0+{\epsilon}^{3}
 \right) y_{{0}}}{ \left( x_0+\epsilon \right) ^{2}}}.
\end{eqnarray*}
Note that $\rho_0(x_0)={{x_0^{3} \left( x_0+2\,\epsilon \right)}{
 \left( x_0+\epsilon \right) ^{-3}}} \epsilon$ is small and so $y_0=0$ is close to being invariant. We also point out that neither $y_0$ nor $x_0$ are fast near $x_0=0=y_0$ as the Jacobian of the vector-field at this point is
 \begin{eqnarray*}
  \partial_{(x,y)} \begin{pmatrix}
                    X_0\\
                    Y_0
                   \end{pmatrix}(0,0) = \begin{pmatrix}
                   0 & 0\\
0 &-\epsilon\end{pmatrix}.
 \end{eqnarray*}
 Next, we set $Y_0(x_0,y_0)=0$ and obtain
\begin{eqnarray}
 y_0=\eta_1(x_0)={\frac {x_0^{3} \left( x_0+2\epsilon \right) \epsilon}{
 \left( x_0+\epsilon \right)  \left( 2x_0^{3}+5\epsilon
x_0^{2}+3{\epsilon}^{2}x_0+{\epsilon}^{3} \right) }}=\frac{\epsilon}{2}-\frac{3\epsilon^2}{4x}+\mathcal O(\epsilon^3),\eqlab{eta1lm}
\end{eqnarray}
as our next approximation of the slow manifold. 
This gives new equations of the form
\begin{eqnarray}
\dot x_0 &=&X_1(x_0,y_1) = {\frac {\epsilon x_0^{2} \left( x_0+\epsilon \right) ^{2}}
{2 x_0^{3}+5 \epsilon x_0^{2}+3 {\epsilon}^{2}x_0+
{\epsilon}^{3}}}+x_0y_1,\nonumber\\
 \dot y_1 &=&Y_1(x_0,y_1)=\rho_1(x_0)+A_1(x_0)y_1,\nonumber
\end{eqnarray}
with 
\begin{eqnarray}
 \rho_1(x_0) ={\frac { \left( 3 x_0^{4}+16 x_0^{3}\epsilon+28 {
\epsilon}^{2}x_0^{2}+20 {\epsilon}^{3}x_0+6 {\epsilon}^{4}
 \right) {\epsilon}^{3}x_0^{4}}{ \left( 2 x_0^{3}+5 
\epsilon x_0^{2}+3 {\epsilon}^{2}x_0+{\epsilon}^{3}
 \right) ^{3}}},\eqlab{rho1lm}
\end{eqnarray}
and $y_1$ given as, $y_0=\eta_1(x_0)+y_1$, the deviation from $y_0=\eta_1(x_0)$. Note how $\rho_1$ 
(cf. \eqref{rho11}) is the product of $-\partial_x \eta_1$, which is $\mathcal O(\epsilon)$ based on \eqref{eta1lm}, and $X_0(x_0,\eta_1(x_0))=-{ {x_0^{2}\epsilon}{(x_0+\epsilon)^{-1}}}$ and therefore it is order $\mathcal O(\epsilon^2)$. However, it is even $\mathcal O(\epsilon^3)$, and this is due to $\eta_1(x_0)=\frac{\epsilon}{2}-\frac{3\epsilon^2}{4x_0}+\mathcal O(\epsilon^3)$ such that $\partial_x \eta_1 =\mathcal O(\epsilon^2)$. Therefore \eqref{eta1lm} gives a slow manifold accurate up to terms including $\epsilon^2$. Surprisingly, even though $y_1$ is not fast near $x_0=0$, the graph $y_1=0$ is still close to being invariant there as $\rho_1(0)=0$. The slow manifold obtained through the SO method always includes nearby equilibria. In particular, there is an improvement in the error as we approach $x_0=0$. The solutions shown in \cite{gou1} do not have this property as the truncation of the expansion about $\epsilon=0$:
\begin{eqnarray*}
 \rho_1(x_0)=\frac{3\epsilon^3}{x_0}+\mathcal O(\epsilon^4),
 \end{eqnarray*}
 does not preserve $\rho_1(0)=0$. 

 The approximation 
 \begin{align}
 y=\eta^1(x_0) = \eta_0(x_0)+\eta_1(x_0) =  {\frac {x_0^{2} \left( {\epsilon}^{2}+4x_0\epsilon+2x_0^{2} \right) 
}{{\epsilon}^{3}+3{\epsilon}^{2}x_0+5x_0^{2}\epsilon+2x_0^{3}}},\eqlab{eta11lm}
 \end{align}
 defines a slow manifold since the vector-field of the reduced system:
\begin{eqnarray*}
 \epsilon^{-1} \dot x_0 = \Lambda_1(x_0)=-{\frac {x_0^{2} \left( x_0+\epsilon \right) ^{2}}
{2 x_0^{3}+5 \epsilon x_0^{2}+3 {\epsilon}^{2}x_0+
{\epsilon}^{3}}}
=-\frac{1}{2}x_0+\frac{1}{4}\epsilon+\mathcal O(\epsilon^2).
\end{eqnarray*}
has small norm. This equation is identical to Eq. (80a) in \cite{gou1} (their $z$ is our $x_0$ and their time is $\epsilon$ times ours). 
 \subsubsection{Comparison with the CSP method}
 To compute the CSP-approximation of the slow manifold we follow \cite{gou1} but use the principle from \cite{kap4} (see Eq. 3.29) of inserting the previous guess into the $B_1^{(q)}$ vector when computing the CSP condition to avoid the cubic equation in Eq. (100) in \cite{gou1}. The second order CSP-approximation obtained is identical to the SO-approximation in \eqref{eta1lm}. To compare the two methods we therefore consider the errors (measured by the resulting $\rho_2$'s) from the third order approximations. This is illustrated in \figref{linde} for $\epsilon=0.1$ (SO: full line, CSP: dotted line). The expressions are lengthy so we prefer not to include them. It is clear that both approximations improve as $x_0$ approaches $0$ at the same rate $\mathcal O(x_0^5)$. This property has to the authors knowledge not been verified in general for the CSP method. In this case, the error from the SO-approximation is smaller for $x_0<0.11$ while the CSP-approximation performs better for larger values of $x_0>0.11$. 
 We used high precision (50 digits) calculations in Maple. 
%
%
 
 
%
\begin{figure}[h!]
\begin{center}
{\includegraphics[width=.5\textwidth,trim= 0 130 0 130]{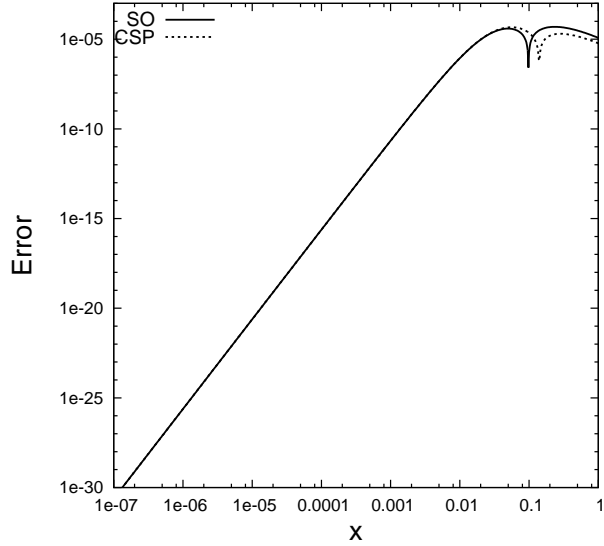}}
\end{center}
\caption{The error from approximating the slow manifold using the CSP method (dotted line) and the SO method (full line) for $\epsilon=0.1$. Both methods have been applied three times. The figure illustrates that the two approximations both improve near the equilibrium $(x,y)=(0,0)$. This is somewhat surprising since the motion normal to the slow manifold approximations are not fast near this point. The premises of both method therefore breaks down.}
\figlab{linde}
\end{figure}


\subsection{Approximating the fibers}
Next, to approximate the fibers we can proceed as for the Michaelis-Menten-Henri model. As above, one might be alerted by the fact that the part of $X_1(x,y_1)$ which is linear in $y_1$, $\mu_0y_1=\partial_y X_1(x_0,0)y_1=x_0y_1$, is not small with respect to $\epsilon$. However, this causes no problems whatsoever, we can just proceed by replacing $\epsilon \phi$ by $\phi$ and consider \eqref{phieqn} in the form
\begin{eqnarray*}
\epsilon \partial_x \Lambda_1 \phi_0+\mu_0 -\phi_0 A_1 = 0.
\end{eqnarray*}
We obtain
\begin{eqnarray}
\phi_0(x_0)&=& -{\frac {x_0 \left( 4\,x_0^{6}+20\,x_0^{5}\epsilon+37
\,x_0^{4}{\epsilon}^{2}+34\,x_0^{3}{\epsilon}^{3}+19\,x_0^{2}{\epsilon}^{4}+6\,x_0{\epsilon}^{5}+{\epsilon}^{6}
 \right) }{{\epsilon}^{7}+5\,x_0{\epsilon}^{6}+18\,x_0^{2}{
\epsilon}^{5}+52\,x_0^{3}{\epsilon}^{4}+86\,x_0^{4}{
\epsilon}^{3}+83\,x_0^{5}{\epsilon}^{2}+42\,x_0^{6}
\epsilon+8\,x_0^{7}}}\nonumber\\
&=&-\frac12+\,{\frac {\epsilon}{8x_0}}-\frac{3\epsilon^2}{32x_0^2}+\mathcal O(\epsilon^3),\eqlab{phi0lm}
\end{eqnarray}
with an error
\begin{eqnarray*}
 \mu_1 &=&-\partial_x\phi_0 \epsilon \Lambda_1 = -\frac{\epsilon^2}{x_0}+\mathcal O(\epsilon^3).
\end{eqnarray*}
The function $\phi_0$ is therefore correct up to terms including $\epsilon$, more accurate than expected and indicated by the subscripts. This is again due to the fact that the dominant term $-1/2$ in $\phi_0$ is independent of $x_0$. An additional application gives 
\begin{eqnarray*}
 \phi_1 = \frac{\epsilon^2}{32x_0^2}+\mathcal O(\epsilon^3),
\end{eqnarray*}
and $\phi^1 = \phi_0+\phi_1=-\frac12+\,{\frac {\epsilon}{8x_0}}-\frac{\epsilon^2}{16x_0^2}+\mathcal O(\epsilon^3)$ is correct up to terms including $\epsilon^2$ since the new error
\begin{eqnarray*}
 \mu_2 = -\frac{\epsilon^3}{32x_0^2}+\mathcal O(\epsilon^4),
\end{eqnarray*}
is $\mathcal O(\epsilon^3)$. Through $x_0=x_1+\phi^1(x_1)y_1$, $y=\eta^1(x_0)+y_1$ we have cf. \eqref{tsp}, in terms of the original variables $(x,y)$ in \eqref{lm}, then obtained the following approximation, correct up to terms including $\epsilon^2$, to the tangent space of the fibers
\begin{eqnarray}
v(\epsilon)=\begin{pmatrix}
 \phi^1\\
 1+\partial_x \eta^1 \phi^1
\end{pmatrix}
 =
 \begin{pmatrix}
 -\frac12+\,{\frac {\epsilon}{8x_0}}-\frac{\epsilon^2}{16x_0^2}\\
\frac12+\,{\frac {\epsilon}{8x_0}}+\frac{\epsilon^2}{16x_0^2}
 \end{pmatrix}+\mathcal O(\epsilon^3),\eqlab{vso}
\end{eqnarray}
based at $(x_0,\eta^1(x_0))$. Note how the leading order terms suggest to replace $(x,y)$ by $(w,z)$ through
\begin{eqnarray*}
 \begin{pmatrix}
  x\\
  y
 \end{pmatrix} = \begin{pmatrix}
  w\\
  0
 \end{pmatrix}+v(0)z =\begin{pmatrix}
  1 &-\frac12 \\
  0 &\frac12
 \end{pmatrix}\begin{pmatrix}
 w\\
 z
 \end{pmatrix}.
\end{eqnarray*}
Indeed this is just
\begin{eqnarray}
 \begin{pmatrix}
 w\\
 z
 \end{pmatrix} = \begin{pmatrix}
 x+y\\
 2y
\end{pmatrix},\eqlab{wz2}
\end{eqnarray}
the transformation \eqref{wz} from above that transforms the Lindemann system into \eqref{wzeqn}; a system in standard slow-fast form. Finally, \figref{fiber_exeps0_4_linde}, a figure similar to \figref{fiber_exeps0_4} for the Michaelis-Menten-Henri model, verifies the contraction along the approximated fiber directions.

\begin{figure}[h!]
\begin{center}
{\includegraphics[width=.5\textwidth]{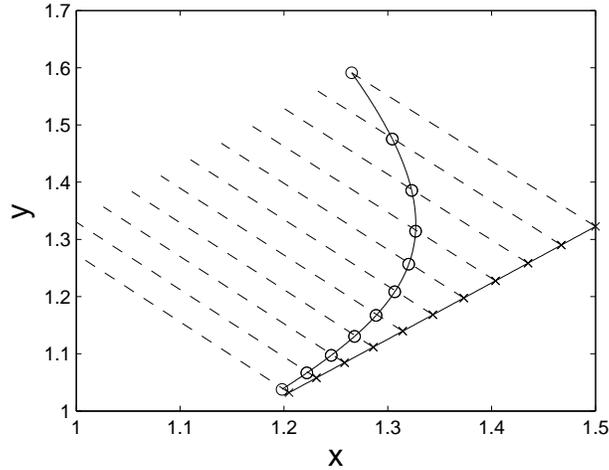}}
\end{center}
\caption{Two solutions $(x_b,y_b)$ ($\times$) and $(x_l,y_l)$ ($\circ$) of \eqref{lm} for $\epsilon=0.4$. The full line represents the slow manifold. The solution $(x_l,y_l)$ is initiated on the fiber corresponding to the base point with $x_{b0}=1.5$, at a distance $s\approx 0.36$ from the base, and is observed to contract to the solution $(x_b,y_b)$ along the fiber directions. The fiber directions obtained through \eqref{vso} are indicated by the dashed lines running from upper left to lower right.}
\figlab{fiber_exeps0_4_linde}
\end{figure}

\subsubsection{Comparison with the CSP method}
For the CSP-approximation of the fiber directions for \eqref{lm} we use Eqs. (3.24)-(3.27) in \cite{kap4}, starting from $A^{(0)} = B_{(0)}=\begin{pmatrix}
                                                                                     0 &1 \\
                                                                                     1 & 0
                                                                                    \end{pmatrix}$ as in \cite{gou1}. 
{We obtain the following matrices:
                                                                                    \begin{align*}
                                                                                     A^{(1)}(x,y) &= \begin{pmatrix}
                                                                                                          A^{(1)}_1(x,y) & A^{(1)}_2(x,y)
                                                                                                         \end{pmatrix}=
\begin{pmatrix}
            -\frac{x}{x+\epsilon} & 1\\
            1-\frac{(2x-y)x}{(x+\epsilon)^2} & \frac{2x-y}{x+\epsilon}
                                                                                               \end{pmatrix},\\
 B_{(1)}(x,y) &=\begin{pmatrix}
                     B_{(1)}^1(x,y)\\
                     B_{(1)}^2(x,y)
                    \end{pmatrix}=
 \begin{pmatrix} 
            -\frac{2x-y}{x+\epsilon} & 1\\
            1-\frac{(2x-y)x}{(x+\epsilon)^2} & \frac{x}{x+\epsilon}
           \end{pmatrix},
                                                                                    \end{align*}
                                                                                    adopting the notation used in \cite{kap4}. 
                                                                                    The first CSP-condition (Eq. (3.29) in \cite{kap4} with $q=1$) then reads
                                                                                    \begin{align*}
                                                                                     B_{(1)}^1(x,\eta_0(x)) \begin{pmatrix}
                                                                                                X(x,\eta^1(x))\\
                                                                                                Y(x,\eta^1(x))\\
                                                                                               \end{pmatrix} = 0,
                                                                                    \end{align*}
                                                                                    giving the first improved slow manifold approximation $y=\eta^1(x)$:
                                                                                    \begin{align*}
                                                                                     \eta^1(x) = {\frac {x^{2} \left( {\epsilon}^{2}+4x\epsilon+2x^{2} \right) 
}{{\epsilon}^{3}+3{\epsilon}^{2}x+5x^{2}\epsilon+2x^{3}}} = x-\frac{1}{2}\epsilon+\mathcal O(\epsilon^2).
                                                                                    \end{align*}
                                                                                    Note again that this $\eta^1$ is the same as the one in \eqref{eta11lm} obtained from the SO-method. This is not true at the following step cf. \figref{linde}. 
                                                                                    To approximate the fiber directions, and finish the first step of the CSP method, we then plug $y=\eta^1(x)$ into the first column of $A^{(1)}$:
                                                                                    \begin{align*}
                                                                                     v = \begin{pmatrix}
            -\frac{x}{x+\epsilon} \\
            1-\frac{(2x-\eta^1(x))x}{(x+\epsilon)^2} 
                                                                                               \end{pmatrix} =\begin{pmatrix}
            -1+\frac{\epsilon}{x}+\mathcal O(\epsilon^2)\\
            \frac{3\epsilon}{2x}+\mathcal O(\epsilon^2)
                                                                                               \end{pmatrix}.
                                                                                    \end{align*}
According to Eq. (3.32) the span of this vector should approximate the tangent spaces to the fibers. It is clearly not in agreement with the SOF-approximation \eqref{vso}. The error is $\mathcal O(1)$. }
An additional application gives more lengthy expressions so we just present the final vector:
%
%
 \begin{eqnarray*}
   v = 
\begin{pmatrix} -\frac12-{\frac {3\epsilon}{8x_0}}\\ \noalign{\medskip}\frac12+{\frac {
\epsilon}{8x}}
\end {pmatrix}+\mathcal O(\epsilon^2),
 \end{eqnarray*}
 whose span cf. Eq. (3.32) approximates the tangent spaces. The error has now been pushed to order $\mathcal O(\epsilon)$ in agreement with the theory of the CSP method. The next approximation gives an error of order $\mathcal O(\epsilon^2)$ and so on. See also \figref{order} where we using Maple have compared the order $n$ of the error for the CSP vectors ($\circ$) and the SOF vectors ($\times$) for the first four applications. Here \textit{order $n$} is understood as the error being $\mathcal O(\epsilon^n)$. This example therefore demonstrates a slower convergence of the fiber approximations for the CSP method (lagging two orders behind) when compared to the approximations obtained from our SOF method. 
\begin{figure}[h!]
\begin{center}
{\includegraphics[width=.5\textwidth,trim = 0 130 0 130]{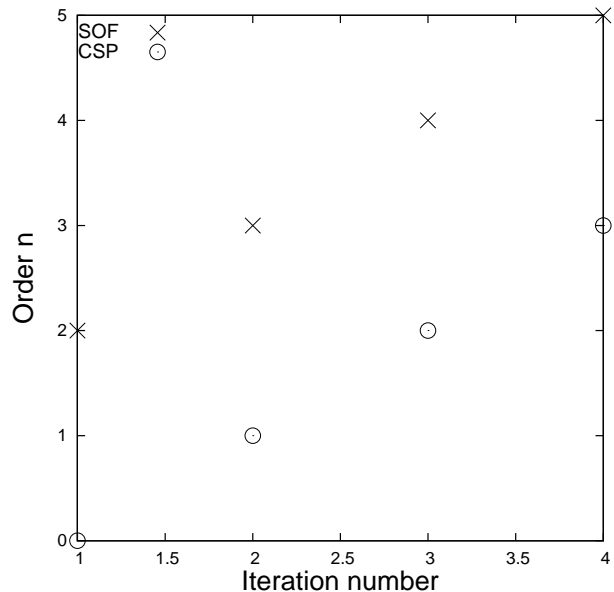}}
\end{center}
\caption{The order $n$ of the error, meaning that the remainder is $\mathcal O(\epsilon^n)$, for the approximation of the fibers using the CSP method ($\circ$) and the SOF method ($\times$) for the first four iterations. In this example, the order for the CSP approximation is two less than the SOF approximation for a given iteration.  }
\figlab{order}
\end{figure}

\section{Conclusion}
In this paper we developed a new method, the SOF method, as an extension of the method of straightening out (SO method) so that it can also be used to approximate fiber directions. The method is based on normal form computations. After having approximated the slow manifold using the SO method, the extended method constructs a transformation of the slow variables as a product of a finite 
 sequence of transformations, each obtained as the solution of a linear
equation, so that the slow dynamics becomes almost independent of the fast
variable to linear order. See \thmref{thm1}. The extended method preserves the unique properties of the SO method such as: (i) It approximates exponentially well. (ii) The method can be written only in terms of the vector-field and its Jacobian matrix, making it suitable for numerical implementation. (iii) No smoothness in $\epsilon$ is required. (iv) The approximations improves near equilibria. Neither naive asymptotic expansions, the ILDM method, the CSP method nor the ZDP method possess all of the properties. In particular, the $\mathcal O(\epsilon^n)$-estimates from the analysis of the CSP and ZDP methods have not yet been improved to exponential ones. Perhaps more importantly, if you were to write out the equations describing the $n$th step of the CSP and ZDP methods they would involve derivatives up to order $n$.  
Our method was successfully applied to two classical examples: the Michaelis-Menten-Henri model and the Lindemann mechanism. The latter demonstrated the use of the method for a system in non-standard slow-fast form. In this example the SOF method gave more accurate approximations of the fiber directions when compared with the approximations obtained using the CSP method. The approach
was also extended further in \thmref{thm2} so that it can be used to approximate, again
exponentially well, the curvature of the fibers. This result holds true
even when the slow manifold is of saddle type. In the saddle case, we only
require that it is not neutral in the sense that there does not exist a
contraction and an expansion rate of equal magnitude. 
\newpage
\bibliography{refs}
\bibliographystyle{plain}
\newpage 
 \end{document}